\numberwithin{equation}{section}
\newtheorem{theorem}{Theorem}[section]
\newtheorem{lemma}[theorem]{Lemma}
\newtheorem{proposition}[theorem]{Proposition}
\newtheorem{follow}[theorem]{Corollary}
\theoremstyle{definition}
\newcommand{\bel}{\begin{equation} \label}
\newcommand{\ee}{\end{equation}}
\newcommand{\pd}{\partial}
\newcommand{\C}{{\mathbb C}}
\newcommand{\R}{{\mathbb R}}
\newcommand{\N}{{\mathbb N}}
\newcommand{\Ds}{D_{x'}}
\newcommand{\Deltas}{\Delta_{x'}}
\newcommand{\nablas}{\nabla_{x'}}
\def\beq{\begin{equation}}
\def\eeq{\end{equation}}
\newcommand{\bea}{\begin{eqnarray}}
\newcommand{\eea}{\end{eqnarray}}
\newcommand{\beas}{\begin{eqnarray*}}
\newcommand{\eeas}{\end{eqnarray*}}
\newcommand{\Pre}[1]{\ensuremath{\mathrm{Re} \left( #1 \right)}}
\newcommand{\Pim}[1]{\ensuremath{\mathrm{Im} \left( #1 \right)}}
{

\newcommand{\abs}[1]{\left\lvert#1\right\rvert}
\newcommand{\norm}[1]{\left\lVert#1\right\rVert}
\newcommand{\FF}{\mathscr{F}_\gamma}
\newcommand{\p}{\partial}
\newcommand{\s}{\sigma}
\newcommand{\set}[1]{\left\{#1\right\}}
\newcommand{\para}[1]{\left(#1\right)}
\newcommand{\cro}[1]{\left[#1\right]}

\newcommand{\seq}[1]{\left<#1\right>}

\newcommand{\Nn}{\mathtt{N}}
\usepackage{graphicx}
\usepackage{color}

\begin{document}
\title[An inverse stability result for non compactly supported potentials]{An inverse stability result for non compactly supported potentials by one arbitrary lateral Neumann observation}
\author{M. Bellassoued}
\address{University of Carthage, Faculty of Sciences of Bizerte, Dep. of Mathematics, 7021 Jarzouna, Bizerte, Tunisie}
\email{mourad.bellassoued@fsb.rnu.tn}
\author{Y. Kian}
\address{Aix-Marseille Universit\'e, CNRS, CPT UMR 7332, 13288 Marseille, France \& Universit\'e de Toulon, CNRS, CPT UMR 7332, 83957 La Garde, France.}
\email{yavar.kian@univ-amu.fr}
\author{E. Soccorsi}
 \address{Aix-Marseille Universit\'e, CNRS, CPT UMR 7332, 13288 Marseille, France \& Universit\'e de Toulon, CNRS, CPT UMR 7332, 83957 La Garde, France.}
\email{eric.soccorsi@univ-amu.fr}

\begin{abstract}
In this paper we investigate the inverse problem of determining the time independent scalar potential of the dynamic Schr\"odinger equation in an infinite cylindrical domain, from partial measurement of the solution on the boundary.
Namely, if the potential is known in a neighborhood of the boundary of the spatial domain, we prove that it can be logarithmic stably determined in the whole waveguide from a single observation of the solution on any arbitrary strip-shaped subset of the boundary.
\end{abstract}

\maketitle


\section{Introduction}


In the present paper we seek global stability in the inverse problem of determining the (non necessarily compactly supported) zero-th order term (the so called electric potential) of the dynamical Schr\"odinger equation in an infinite cylindrical domain, from a single lateral observation of the solution over the entire time span. But in contrast to \cite{KPS2}, where the measurement is performed on a sub-boundary fulfilling the geometric control property expressed by Bardos, Lebeau and Rauch in \cite{[BLR]}, we allow in this paper that the Neumann data be taken on any infinitely extended strip-like subset of the lateral boundary, with positive Lebesgue measure.

\subsection{Inverse problem}
Let us make this statement a little bit more precise. We stick with the notations of \cite{KPS2}. Namely, $\omega$ is an open connected bounded domain in $\R^{n-1}$, $n \geq 3$, with smooth boundary $\p\omega$, and we consider the infinite straight cylinder $\Omega:=\omega\times\R$, in $\R^n$, with cross section $\omega$. Its boundary is denoted by $\Gamma:=\p\omega\times\R$. Given $T>0$, $p : \Omega \to \R$ and $u_0 : \Omega \to \R$, we consider the Schr\"odinger equation,
\bel{1.1}
-i \p_t u(x,t)-\Delta u(x,t)+p(x)u(x,t)=0,\ (x,t) \in \Omega \times (0,T),
\ee
associated with the initial data $u_0$,
\bel{1.2}
u(x,0)=u_0(x),\ x\in\Omega,
\ee
and the homogeneous Dirichlet boundary condition,
\bel{1.3}
u(x,t)=0,\ (x,t) \in \Gamma \times (0,T).
\ee
For suitable (real-valued) $u_0$ and $p$, and under appropriate compatibility conditions on these two functions, we denote by $u_p$ the unique $\mathcal{C}^0([0,T],H^1(\Omega))$-solution to the initial boundary value problem (abbreviated as IBVP in the sequel) \eqref{1.1}--\eqref{1.3}. Given an arbitrary relatively open subset $S_*$ of $\p \omega$, with positive Lebesgue measure, we aim for determining the unknown potential $p=p(x)$ from one Neumann observation of the function $u_p$ on $\Sigma_*:=\Gamma_* \times (0,T)$, where $\Gamma_*:=S_* \times \R$ is an infinitely extended strip designed on the boundary $\Gamma$ of the waveguide $\Omega$. We refer to \cite[Section 1]{KPS1} for both the relevance and the physical interpretation of the system modeled by \eqref{1.1}--\eqref{1.3} and the related inverse problem.

The uniqueness issue in the inverse problem examined in this paper is to know whether any two admissible potentials $p_j$, $j=1,2$, are equal, i.e. $p_1(x)=p_2(x)$ for a.e. $x \in \Omega$, if their observation data coincide, that is if the following identity holds true:
$$
\p_\nu u_{p_1}(x,t)=\p_\nu u_{p_2}(x,t),\ (x,t) \in \Sigma_*.
$$
Here $\nu=\nu(x)$, $x \in \Gamma$, denotes as usual the unit outward normal vector to $\Gamma$ and $\p_\nu u=\nabla u \cdot \nu$ stands for the normal derivative of $u$.
We shall give a positive answer to this question provided the two unknown functions $p_1$ and $p_2$ coincide in a neighborhood of the boundary $\Gamma$. This extra information imposed on the unknown zero-th order coefficient of \eqref{1.1} near $\Gamma$ is technically restrictive, but it is acceptable from a strict practical viewpoint upon admitting that the electric potential can be measured from outside the domain $\Omega$ in the vicinity of the boundary.

Actually, the above mentioned uniqueness result follows from a stronger statement claiming logarithmic stability in the determination of the potential $p$ from the observation of $\pd_\nu u_p$ on $\Sigma_*$. This amount to saying that $\| p_2-p_1 \|_{L^2(\Omega)}$ can be bounded from above in terms of (the logarithm of) a suitable norm of the trace of the function $\p_\nu u_{p_2}- \p_\nu u_{p_1}$ on $\Sigma_*$.
Such stability estimates play a key role in the analysis of ill-posed inverse problems (in the classical sense of \cite{[La]}), by suggesting regularization parameters and indicating the rate of convergence of the regularized solutions to the exact one.

The main achievement of this paper is that the Neumann data used in this stability estimate can be measured on any arbitrary unbounded strip-shaped subset $\Gamma_*=S_* \times \R$ of the whole boundary $\Gamma$.
The key idea of the proof is to combine the analysis carried out in
\cite{[Be1], [IY]}, which is based on a Carleman estimate specifically designed for the system under consideration, with
the Fourier-Bros-Iagolnitzer (abbreviated as FBI in the following) transformation used by Robbiano for
sharp unique continuation in \cite{[Ro]} (see also
\cite{[LR1],[Ph]}). Indeed we take advantage of the fact that the FBI transform of the (time derivative of the) solution to \eqref{1.1} satisfies a parabolic equation in the vicinity of the boundary $\Gamma$ in order to apply a Carleman parabolic estimate where no geometric condition is imposed on the control domain.

\subsection{Existing papers}
There is a wide mathematical literature on uniqueness
and stability issues in inverse coefficients problems of partial differential equations, see e.g. \cite{[Be2], [BeYa2],[IY], [PY]} and the references therein.
However, most of the known results on these two problems require that the corresponding Dirichlet or Neumann data be at least 
measured on a sufficiently large part $\Gamma_\sharp$ of the boundary $\Gamma$ of the spatial domain under consideration, if not on the whole boundary itself.

On the other hand, when $\Gamma_\sharp=\set{ x \in \Gamma,\ (x-x_0).\nu(x)\geq 0}$, where $x_0$ denotes a fixed point in the complement set of $\overline{\Omega}$, is the sub-boundary suggested by the geometric optics condition for the observability derived by Bardos, Lebeau and Rauch in \cite{[BLR]}, Baudouin and Puel \cite{[BP]} proved uniqueness and Lipschitz stability in the inverse problem of determining the electric potential of the Schr\"odinger equation from the observation of one Neumann data on $\Gamma_\sharp$.
In the present paper we claim logarithmic stability for arbitrarily small boundary parts $\Gamma_\sharp=\Gamma_*$ which do no necessarily comply with the geometric condition of Bardos, Lebeau and Rauch. Nevertheless this is at the expense of a stronger assumption on the potential, which is assumed to be known in a neighborhood of $\Gamma$.

In the framework this paper, we are dealing with a single observation of the solution. Uniqueness results for multidimensional inverse problems from a single observation of the solution were first derived by Bukhgeim and Klibanov \cite{[BK]} or Yamamoto \cite{[Y]} when $\Gamma_\sharp=\Gamma$, by means of suitable Carleman estimates. For the analysis of inverse coefficients problems with a finite number of observations, based on Carleman estimates, we also refer to, Bellassoued \cite{[Be1],[Be2]}, Bellassoued, Imanuvilov and Yamamoto \cite{[BeImYa]}, Bellassoued and Yamamoto \cite{[BeYa],[BeYa2]}, Bukhgeim \cite{[B]}, Bukhgeim, Cheng,
Isakov and Yamamoto \cite{[BCIV]}, Choulli and Yamamoto \cite{[CY1],[CY2]}, Imanuvilov and Yamamoto
\cite{[IY],[IY2],[IY3]}, Isakov \cite{[I1]}, Isakov and Yamamoto
\cite{[IsY]}, Kha\u idarov \cite{[KH1]}, Klibanov \cite{[KL]},
Klibanov and Yamamoto \cite{[KY]}, Puel and Yamamoto \cite{[PY]}, and
Yamamoto \cite{[Y]}. 

The stability issue in the inverse problem of determining the time-independent electric potential in the dynamic Schr\"odinger equation from a single boundary measurement was treated by 
Baudouin and Puel in \cite{[BP]} and by Mercado, Osses and Rosier in \cite{MeOsRo}. In these two papers the Neumann data is observed on a sub-boundary satisfying the geometric control condition of Bardos, Lebeau and Rauch. This condition was relaxed in \cite{BC} under the assumption that the potential is known near the boundary.

As for inverse problems for the non-stationary
Schr\"odinger equation by infinitely many boundary observations
(i.e. the Dirichlet-to-Neumann map, abbreviated as DN map in the sequel), we refer to e.g. Avdonin et al.
\cite{[ALP]}, where the real valued electric potential is retrieved from the partial knowledge of the DN map (the observation of the Neumann data is performed on a sub-part of $\Gamma$). 

In all the above mentioned papers the Schr\"odinger equation is defined in a bounded spatial domain. In the present work we rather investigate the problem of determining the scalar potential of the Schr\"odinger equation in an infinite cylindrical domain. Actually, there are only a few mathematical papers dealing with inverse coefficient problems in an unbounded domain. In \cite{LiUl}, Li and Uhlmann proved uniqueness in the determination of the compactly supported electric potential in an inifinite slab from partial DN map. In \cite{KPS1}, the compactly supported potential of the Schr\"odinger equation defined in an unbounded waveguide was Lipschitz stably retrieved from one measurement of the solution on a sub-boundary fulfilling the geometric control property of Bardos, Lebeau and Rauch. This result was extended to non compactly supported potentials in \cite{KPS2}, but Lipschitz stability degenerated to H\"older stability. Similar uniqueness results for non-compactly supported coefficients of the wave equation are derived by Rakesh in \cite{[Rakesh]} and Nakamura in \cite{[Nakamura]}, while the stability issue was treated by Kian in \cite{K}. 

\subsection{Main results}
We start by examining the direct problem associated with \eqref{1.1}--\eqref{1.3}. To this purpose we consider a fixed natural number $k \in \N^*:=\{ 1,2,\ldots \}$, and given $p_0 \in  W^{2(k-1),\infty}(\Omega)$ and $u_0 \in H^{2k}(\Omega)$, we set
$$
\mathrm{v}_0:=u_0\ \mbox{and}\ \mathrm{v}_j:=(-\Delta+p_0)\mathrm{v}_{j-1}\ \mbox{for}\ j=1,\ldots,k-1.
$$
We say that $u_0$ satisfies the $k$-th order compatibility conditions with respect to $p_0$ if the $k$ following identities
$$
\mathrm{v}_j (x)=0,\ x \in \Gamma,\ j=0,\cdots,k-1,
$$
hold simultaneously.
Evidently, if $u_0$ satisfies the $k$-th order compatibility conditions with respect to $p_0$ then it satisfies the $k$-th order compatibility conditions with respect to $p$ for any $p \in  W^{2(k-1),\infty}(\Omega)$ verifying $p=p_0$ in the vicinity of $\Gamma$.

Further, we introduce the set
$$ 
\mathcal{H}^k=\mathcal{H}^k(\Omega\times(0,T)):=
\bigcap_{j=0}^k\mathcal{C}^j([0,T],H^{2(k-j)}(\Omega)),
$$
where $H^k(\Omega)$ denotes the usual Sobolev space of order $k$ in $\Omega$. Endowed with the norm 
$$
\norm{u}_{\mathcal{H}^k}^2:=\sum_{j=0}^k\|\p_t^ju\|_{\mathcal{
C}^0([0,T],H^{2(k-j)}(\Omega))}^2,\  u\in \mathcal{H}^k,
$$
$\mathcal{H}^k$ is a Banach space, and we recall from \cite[Theorem 1.1]{KPS2} the following existence and uniqueness result for the system \eqref{1.1}--\eqref{1.3}.
\begin{proposition}
\label{L1.2}
For $k \in \N^*$ fixed, assume that $\pd \omega$ is $\mathcal{C}^{2k}$, and pick $p \in  W^{2(k-1),\infty}(\Omega)$ such that we have $\| p \|_{W^{2(k-1),\infty}(\Omega)} \leq M$ for some $M \geq 0$. Then for any $u_0 \in H^{2k}(\Omega)$ satisfying the $k$-th order compatibility conditions with respect to $p$, there exists a unique solution $u \in \mathcal{H}^k$ to the IBVP \eqref{1.1}--\eqref{1.3}. Moreover, the estimate
\bel{est-energie}
\| u \|_{\mathcal{H}^k} \leq C \| u_0 \|_{H^{2k}(\Omega)},
\ee
holds for some constant $C>0$, depending only on $\omega$, $T$, $k$ and $M$.
\end{proposition}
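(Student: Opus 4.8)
The plan is to recast the IBVP \eqref{1.1}--\eqref{1.3} as an abstract Cauchy problem governed by the Schr\"odinger operator and to exploit the unitary group it generates. I would introduce the operator $A:=-\Delta+p$ acting in $L^2(\Omega)$ with domain $\mathcal{D}(A):=H^2(\Omega)\cap H_0^1(\Omega)$. Since $\p\omega$ is smooth and $p$ is real-valued and bounded, $-\Delta$ is self-adjoint on $\mathcal{D}(A)$ and $A$ is a bounded symmetric perturbation of it, hence self-adjoint by the Kato--Rellich theorem. Equation \eqref{1.1} then reads $-i\p_t u=\Delta u-pu=-Au$, i.e. $\p_t u=-iAu$, so by Stone's theorem the unique $\mathcal{C}^0([0,T],L^2(\Omega))$-solution with $u(0)=u_0$ is $u(t)=e^{-itA}u_0$, where $(e^{-itA})_{t\in\R}$ is the strongly continuous unitary group generated by $-iA$. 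As any $\mathcal{H}^k$-solution is in particular an $L^2$-solution, this settles uniqueness at once.

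Next I would identify the higher-order regularity with membership in the domains of the powers of $A$. The key point is the characterization
\bel{domAk}
\mathcal{D}(A^k)=\set{v\in H^{2k}(\Omega):\ \mathrm{v}_j=0\ \mbox{on}\ \Gamma,\ j=0,\ldots,k-1},
\ee
together with the norm equivalence $\|v\|_{H^{2m}(\Omega)}\le C\para{\|A^mv\|_{L^2(\Omega)}+\|v\|_{L^2(\Omega)}}$ for each $0\le m\le k$, the constant $C$ depending only on $\omega$, $k$ and $M$. The boundary conditions come from unwinding the definition of $\mathcal{D}(A^m)$ recursively: $v\in\mathcal{D}(A)$ forces $\mathrm{v}_0=v=0$ on $\Gamma$, then $v\in\mathcal{D}(A^2)$ forces $Av=\mathrm{v}_1=0$ on $\Gamma$, and so on up to $\mathrm{v}_{k-1}$. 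The reverse inclusion and the elliptic estimate follow from iterated $H^2$ elliptic regularity for $-\Delta+p$ with Dirichlet data, the hypothesis $\p\omega\in\mathcal{C}^{2k}$ being exactly what is needed to reach $H^{2k}$. Since $A^kv$ involves derivatives of $p$ up to order $2(k-1)$, the estimate is governed by $\|p\|_{W^{2(k-1),\infty}(\Omega)}\le M$. Under the $k$-th order compatibility conditions, \eqref{domAk} then yields $u_0\in\mathcal{D}(A^k)$ with $\|A^ku_0\|_{L^2(\Omega)}\le C\|u_0\|_{H^{2k}(\Omega)}$.

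The regularity in $\mathcal{H}^k$ and the estimate \eqref{est-energie} follow from the fact that $e^{-itA}$ commutes with $A$ and preserves each $\mathcal{D}(A^m)$. For $0\le j\le k$ and $u_0\in\mathcal{D}(A^k)$ one has $\p_t^j u(t)=(-iA)^je^{-itA}u_0=e^{-itA}(-iA)^ju_0$, so $\p_t^j u\in\mathcal{C}^0([0,T],\mathcal{D}(A^{k-j}))$ by strong continuity of the group on $\mathcal{D}(A^{k-j})$ in its graph norm. Applying the norm equivalence to $\p_t^j u(t)$ and using unitarity, one gets
\bel{estchain}
\|\p_t^j u(t)\|_{H^{2(k-j)}(\Omega)}\le C\para{\|A^{k-j}(-iA)^ju_0\|_{L^2(\Omega)}+\|(-iA)^ju_0\|_{L^2(\Omega)}}=C\para{\|A^ku_0\|_{L^2(\Omega)}+\|A^ju_0\|_{L^2(\Omega)}}\le C\|u_0\|_{H^{2k}(\Omega)},
\ee
uniformly in $t\in[0,T]$; summing the squares over $j=0,\ldots,k$ delivers \eqref{est-energie} with a constant depending only on $\omega$, $T$, $k$ and $M$.

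The main obstacle is to make the elliptic estimate underlying \eqref{domAk} uniform over the unbounded cylinder $\Omega=\omega\times\R$, since standard elliptic regularity is formulated on bounded domains. I would remove this difficulty by a translation-invariant localization in the axial direction: covering $\Omega$ by the overlapping slabs $\omega\times(m-1,m+1)$, $m\in\Z$, one applies interior and boundary $H^{2k}$ elliptic estimates for $-\Delta+p$ on each slab. Because the cross-section $\omega$ and its boundary are fixed and the bound $\|p\|_{W^{2(k-1),\infty}(\Omega)}\le M$ is invariant under translation in $x_n$, these local estimates hold with one and the same constant for every $m$; summing them in $\ell^2$ over $m$ reassembles the global bound $\|v\|_{H^{2k}(\Omega)}\le C\para{\|A^kv\|_{L^2(\Omega)}+\|v\|_{L^2(\Omega)}}$ with a constant independent of the infinite length of the waveguide. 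This translation-uniformity of the elliptic theory is precisely the feature that lets the entire group-theoretic argument carry over from the bounded to the unbounded setting.
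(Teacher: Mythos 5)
Your proof is correct. The paper itself does not prove this proposition: it is quoted from \cite[Theorem 1.1]{KPS2}, and the argument there follows essentially the route you take --- self-adjointness of $A=-\Delta+p$ on $H^2(\Omega)\cap H^1_0(\Omega)$, identification of the $k$-th order compatibility conditions with $u_0\in\mathcal{D}(A^k)$, and propagation of regularity along the unitary group $e^{-itA}$ combined with elliptic regularity made uniform over the unbounded cylinder. Two details you gloss over are worth flagging: first, self-adjointness of the Dirichlet Laplacian on the waveguide with domain exactly $H^2(\Omega)\cap H^1_0(\Omega)$ already requires the uniform elliptic estimate, so your slab-localization lemma must be established first and does double duty; second, the slabs $\omega\times(m-1,m+1)$ have edges where the lateral boundary meets the caps, so the localization should be carried out with axial cutoffs $\chi_m(x_n)$ applied to $v$ (so that the localized function vanishes near the caps and only the translation-invariant lateral boundary is seen), rather than by invoking boundary $H^{2k}$ regularity on the closed slab itself. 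Both are routine repairs and do not affect the structure of your argument.
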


Put $N:=\left[ n/4 \right]+1$, where $\left[ s \right]$ stands for the integer part of $s \in \R$. Then, applying Proposition \ref{L1.2} with $k=N+1$, we get that
$u \in \mathcal{C}^1([0,T], H^{2 N}(\Omega))$ satisfies the estimate $\| u \|_{\mathcal{C}^1([0,T];H^{2 N}(\Omega))} \leq C \| u_0 \|_{H^{2(N+1)}(\Omega)}$. Further, as the embedding $H^{2 N}(\Omega) \hookrightarrow L^{\infty}(\Omega)$ is continuous, since $2 N > n \slash 2$, we deduce from Proposition \ref{L1.2} the following claim.
\begin{follow}
\label{cor-bounded}
Assume that the conditions of Proposition \ref{L1.2} are satisfied with $k=N+1$. Then there exists a positive constant $C$, depending only on $\omega$, $T$ and $M$, such that the solution $u$ to \eqref{1.1}--\eqref{1.3} satisfies the estimate:
$$
\| u \|_{\mathcal{C}^1([0,T],L^\infty(\Omega))} \leq C \| u_0 \|_{H^{2(N+1)}(\Omega)}.
$$
\end{follow}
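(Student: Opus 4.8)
The plan is to read the corollary off Proposition \ref{L1.2} by specializing $k=N+1$, and then to upgrade the spatial regularity to $L^\infty$ through a Sobolev embedding, exactly along the lines sketched in the paragraph preceding the statement. First I would apply Proposition \ref{L1.2} with $k=N+1$; this is legitimate since the hypotheses require $\p\omega$ to be $\mathcal{C}^{2(N+1)}$ and $u_0\in H^{2(N+1)}(\Omega)$ to satisfy the $(N+1)$-th order compatibility conditions with respect to $p$, which are precisely the standing assumptions of the corollary. The proposition then yields $u\in\mathcal{H}^{N+1}$ together with the bound $\norm{u}_{\mathcal{H}^{N+1}}\leq C\norm{u_0}_{H^{2(N+1)}(\Omega)}$, for a constant $C$ depending only on $\omega$, $T$ and $M$.

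Next I would extract the $\mathcal{C}^1([0,T],H^{2N}(\Omega))$ norm from the very definition of the $\mathcal{H}^{N+1}$ norm. Since $\mathcal{H}^{N+1}=\bigcap_{j=0}^{N+1}\mathcal{C}^j([0,T],H^{2(N+1-j)}(\Omega))$, the summands indexed by $j=0$ and $j=1$ in the $\mathcal{H}^{N+1}$ norm are $\norm{u}_{\mathcal{C}^0([0,T],H^{2(N+1)}(\Omega))}$ and $\norm{\p_t u}_{\mathcal{C}^0([0,T],H^{2N}(\Omega))}$, respectively. Using the continuous inclusion $H^{2(N+1)}(\Omega)\hookrightarrow H^{2N}(\Omega)$, both $\sup_{t\in[0,T]}\norm{u(\cdot,t)}_{H^{2N}(\Omega)}$ and $\sup_{t\in[0,T]}\norm{\p_t u(\cdot,t)}_{H^{2N}(\Omega)}$ are dominated by $\norm{u}_{\mathcal{H}^{N+1}}$, whence $\norm{u}_{\mathcal{C}^1([0,T],H^{2N}(\Omega))}\leq C\norm{u_0}_{H^{2(N+1)}(\Omega)}$.

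Finally I would invoke the Sobolev embedding $H^{2N}(\Omega)\hookrightarrow L^\infty(\Omega)$, which is available because the choice $N=[n/4]+1$ forces $2N>n/2$. Applying this embedding at each fixed $t\in[0,T]$ to both $u(\cdot,t)$ and $\p_t u(\cdot,t)$, and then taking the supremum over $t$, gives $\norm{u}_{\mathcal{C}^1([0,T],L^\infty(\Omega))}\leq C_{\mathrm{emb}}\norm{u}_{\mathcal{C}^1([0,T],H^{2N}(\Omega))}$; chaining this with the estimate of the previous paragraph produces the claimed inequality with a constant depending only on $\omega$, $T$ and $M$.

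The only point requiring genuine care, and the step I expect to be the main obstacle, is that $\Omega=\omega\times\R$ is unbounded, so one must verify that the embedding constant $C_{\mathrm{emb}}$ does not degenerate along the axial direction. I would handle this by covering $\Omega$ with the unit slabs $\omega\times(m,m+1)$, $m\in\Z$, each of which is bi-Lipschitz equivalent to the fixed reference slab $\omega\times(0,1)$ through an axial translation, so that the local Sobolev embedding constant is uniform in $m$. Taking the supremum of the resulting local $L^\infty$ bounds then delivers a global embedding with a constant depending only on $\omega$ and $n$; this uniformity is standard for cylindrical domains and presents no conceptual difficulty once the covering is made explicit.
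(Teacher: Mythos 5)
Your proof is correct and follows essentially the same route as the paper: apply Proposition \ref{L1.2} with $k=N+1$ to extract the bound on $\|u\|_{\mathcal{C}^1([0,T],H^{2N}(\Omega))}$, then conclude via the Sobolev embedding $H^{2N}(\Omega)\hookrightarrow L^\infty(\Omega)$, which holds since $2N>n/2$. Your extra step checking that the embedding constant does not degenerate on the unbounded cylinder (covering by translated unit slabs) is a detail the paper leaves implicit, and your justification of it is sound.
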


Having seen this we turn now to introducing the inverse problem associated with \eqref{1.1}--\eqref{1.3}. We consider $p_0 \in  W^{2N,\infty}(\Omega;\R)$ and pick an open subset $\omega_0$ of $\omega$ such that $\pd \omega \subset \overline{\omega_0}$.
Given $b>0 $ and $d>0$, we aim to retrieve all functions $p : \Omega \to \R$ satisfying
\bel{1.8}
\Nn_{b,d}(p-p_0):=\| e^{b\seq{x_n}^d}(p-p_0)\|_{L^\infty(\Omega)}<\infty\ \mbox{and}\ p(x)=p_{0}(x)\ \mbox{for}\ x\in\Omega_0:=\omega_0\times\R.
\ee
Here and henceforth $\langle s \rangle $ is a short hand for $(1+s^2)^{1 \slash s}$, $s \in \R$. Notice that the assumption \eqref{1.8} is weaker than the compactness condition imposed in \cite[Theorem 1.1]{KPS1} on the support of the unknown part of $p$. 
Further, $M$ being an {\it a priori} fixed non-negative constant, we define the set of admissible potentials as
\bel{1.9}
\mathcal{A}(p_0,\omega_0):= \{ p \in W^{2 N,\infty}(\Omega);\ p=p_0\ \mbox{in}\ \Omega_0,\
\|p\|_{W^{2 N,\infty}(\Omega)}\leq M\ \mbox{and}\ \Nn_{b,d}(p-p_0)\leq M \}.
\ee
Last, we chose a relatively open subset $S_*$ of $\p \omega$, put $\Gamma_*:=S_* \times \R$ and introduce the norm
$$ \norm{\p_\nu u}_*:=\norm{\p_\nu
u}_{H^1(0,T;L^2(\Gamma_*))},\ u\in\mathcal{H}^k.
$$
The main result of this article is as follows.
\begin{theorem}
\label{thm-inv} 
Let the conditions of Proposition \ref{L1.2} be satisfied with $k=N+1$ and $p=p_0$.
Assume moreover that $u_0$ fulfills $\| u_0 \|_{H^{2(N+1)}(\Omega)} \leq M'$ for some constant $M'>0$, and that
\bel{1.11}
\exists \kappa>0,\ \exists d_0 \in (0, 2 d \slash 3),\ | u_0(x',x_n) | \geq \kappa \langle x_n \rangle^{-d_0 \slash 2},\ (x',x_n) \in \Omega \backslash \Omega_0.
\ee
For $p_j \in \mathcal{A}(p_0,\omega_0)$, $j=1,2$, we denote by $u_j$ the $\mathcal{H}^{N+1}$-solution to \eqref{1.1}--\eqref{1.3}, where $p_j$ is substituted for $p$. Then, for any $\epsilon\in (0,N \slash 2)$, there exists a constant $C=C(\omega, \omega_0, T, M, M', b, d, \epsilon)>0$, such that we have
\bel{1.12}
\norm{p_1-p_2}_{L^2(\Omega)}\leq
C \para{\norm{\p_\nu(u_1-u_2)}_*+\abs{\log \norm{\p_\nu(u_1-u_2)}_*}^{-1}}^{\epsilon}.
\ee
\end{theorem}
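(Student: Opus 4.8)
The plan is to combine the Bukhgeim--Klibanov linearization with a two-stage Carleman argument, the novelty being that the geometric control hypothesis is bypassed through an FBI transform. Write $u:=u_1-u_2$ and $q:=p_1-p_2$, so that $u$ solves $-i\p_t u-\Delta u+p_1 u=-q\,u_2$ in $\Omega\times(0,T)$ with $u(\cdot,0)=0$, $u=0$ on $\Gamma\times(0,T)$, and the observed quantity is $\p_\nu u$ on $\Sigma_*$. Since $p_1=p_2=p_0$ on $\Omega_0$, the source $q\,u_2$ is supported in the interior core $(\omega\setminus\omega_0)\times\R$ and vanishes on the collar $\Omega_0$ around $\Gamma$. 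Differentiating the equation in time and evaluating at $t=0$ gives $\p_t u(\cdot,0)=-i\,q\,u_0$; hence, by the lower bound \eqref{1.11}, controlling a suitably weighted $L^2$-norm of $y:=\p_t u$ at $t=0$ controls $\|q\|_{L^2}$. Note that the $H^1(0,T)$-in-time character of $\|\cdot\|_*$ means that the Neumann trace of $y$ on $\Sigma_*$ is itself bounded by $\|\p_\nu(u_1-u_2)\|_*$. After a reflection of $u$ to $t\in(-T,T)$ (legitimate by the time-reversibility of the Schr\"odinger equation with real potential), the task reduces to estimating $\|y(\cdot,0)\|$ from the trace of $\p_\nu y$ on $\Sigma_*$.

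First I would treat the collar $\Omega_0$, where $y$ solves the \emph{homogeneous} Schr\"odinger equation. Following Robbiano \cite{[Ro]}, apply the FBI transform in the time variable to $y$, producing a function $w=\FF y$ that, in $\Omega_0\times(0,T)$, satisfies a parabolic equation (the Gaussian regularization in time trading the oscillatory Schr\"odinger dynamics for a diffusive one). To this parabolic problem I would apply a Fursikov--Imanuvilov type Carleman estimate with observation region $\Sigma_*$. The crucial gain is that parabolic observability requires \emph{no} geometric condition on the observation set, so an arbitrary strip $\Gamma_*$ of positive measure is admissible; this propagates the boundary data across the known collar and yields a bound for $w$ on the inner interface $\p\omega_0\times\R$ in terms of $\p_\nu y$ on $\Sigma_*$, with an observability constant growing like $e^{C\gamma}$ in the FBI parameter $\gamma$.

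Next I would invert the FBI transform to return to $y$ itself on a neighbourhood of the interface. This inversion is a quantitative analytic continuation: $w$ approximates $y$ up to an error $O(\gamma^{-\rho})$ governed by the $\mathcal{H}^{N+1}$-regularity supplied by Proposition \ref{L1.2} and Corollary \ref{cor-bounded}. Balancing this approximation error against the factor $e^{C\gamma}$ in the parabolic observability inequality and optimizing $\gamma\sim|\log\|\p_\nu(u_1-u_2)\|_*|$ is exactly what produces the logarithmic modulus of continuity in \eqref{1.12}. With interior Cauchy data for $y$ now available on $\p\omega_0\times\R$, I would invoke the global Carleman estimate for the Schr\"odinger operator from \cite{[Be1],[IY]} (the Baudouin--Puel scheme of \cite{[BP]}) on a truncated cylinder $\omega\times(-R,R)$, with the weight peaked at $t=0$, to absorb $y$ and $\nabla y$ and extract $\|y(\cdot,0)\|_{L^2(\omega\times(-R,R))}=\|q\,u_0\|_{L^2}$, hence $\|q\|_{L^2(\omega\times(-R,R))}$ via \eqref{1.11}.

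It remains to handle the unbounded direction and to optimize. On the tails $|x_n|>R$ the unknown $q=p_1-p_2$ is controlled \emph{a priori} by the exponential weight in $\Nn_{b,d}$, giving $\|q\|_{L^2(|x_n|>R)}\lesssim M e^{-cR^d}$; the admissibility condition $d_0<2d/3$ is what reconciles the polynomial loss $\langle x_n\rangle^{d_0}$ coming from \eqref{1.11} with this super-polynomial decay, so that the truncation radius $R$ can be sent to infinity consistently. Choosing $R$, the Carleman parameter and $\gamma$ optimally, and interpolating between the $L^2$ bound and the $\mathcal{H}^{N+1}$ a priori bound (which is where the restriction $\epsilon\in(0,N/2)$ originates, via Sobolev embedding), yields \eqref{1.12}. \textbf{The main obstacle} I anticipate is the gluing step: transferring the parabolic (FBI) estimate on the collar into the Schr\"odinger Carleman estimate on the core while keeping every constant uniform in $x_n$, together with making the FBI inversion quantitative enough that the only non-Lipschitz loss is the single logarithm.
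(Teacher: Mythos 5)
Your proposal is correct and follows essentially the same route as the paper: Bukhgeim--Klibanov linearization with time symmetrization, a Carleman estimate for the Schr\"odinger operator on the infinite cylinder that reduces everything to an observation in the collar where $p_1=p_2$ (Proposition \ref{P2.2} and Lemma \ref{L3.1}), the FBI-transform/parabolic-Carleman argument of Lemma \ref{L2.1} to reach the arbitrary strip $\Gamma_*$ with the logarithmic loss coming from the choice $\gamma\sim\abs{\log\norm{\p_\nu(u_1-u_2)}_*}$, and control of the unbounded direction via the decay \eqref{1.8} together with $d_0<2d\slash 3$. The only deviations are presentational---you run the two Carleman steps in the opposite order and phrase the collar information as Cauchy data on the interface $\p\omega_0\times\R$, whereas the paper works with a volume observation on $(\Omega_3\setminus\Omega_2)\times(-T,T)$, which avoids interface trace terms; note also that the restriction $\epsilon<N\slash 2$ arises in the paper as $\epsilon=\theta\mu N$ with $\theta<1\slash 2$ from the tail-splitting exponent \eqref{3.9b} and $\mu<1$ from the FBI parameter, rather than from Sobolev embedding.
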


\subsection{Comments}

Thanks to the extra information $p_1 = p_2$ in the vicinity of $\Gamma$, the sharp unique continuation result by
Robbiano \cite{[Ro2]}, Robbiano and Zuily \cite{[RZ]} or Tataru
\cite{[Ta],[Ta2]}, entails $u_1 = u_2$ and $\nabla u_1 =
\nabla u_2$ on $\partial(\Omega\setminus\overline{\Omega_0}) \times
(0,T)$, provided $T>0$ is sufficiently large. Therefore the
method developed by Baudouin and Puel in \cite{[BP]} yields
uniqueness in the inverse problem under consideration. However, since we address the stability issue here, it is worth noticing that Theorem \ref{thm-inv} cannot be obtained by only combining the results of \cite{[IY2], [Ro2],[Ta]}. 

The technique carried out in this article may be applied, with appropriate modifications, to the determination of higher order
unknown coefficients of the Schr\"odinger equation, but in order to avoid the inappropriate expense of the size of this paper, we shall not go further into details about the treatment of this specific problem.

The analysis developed in this paper boils down to a new specifically designed Carleman estimate for the Schr\"odinger equation in
the cylindrical domain $\Omega \times (0,T)$, when the classical one is valid only
in level sets bounded by the weight function. For a general
treatment of Carleman estimates, we refer to H\"ormander \cite{[H]}, Isakov
\cite{[I1]}, Tataru \cite{[Ta]}, and also to Baudouin and Puel \cite{[BP]}, where Carleman estimates are derived in a direct pointwise manner. Due to the extra information $p_1=p_2$ in the vicinity of the boundary $\Gamma$, it is useless to discuss here the uniform Lopatinskii condition (see \cite{[Ta]} or \cite[Section 1.3]{[BeLR]}) or Carleman estimates with a reduced number of boundary traces.

We assume in \eqref{1.11} that $\abs{u(\cdot,0)}=\abs{u_0}>0$ in any subset of $\Omega$ where the electric potential is
retrieved. This is because the uniqueness of the potential is not known in general, without this specific assumption, even in
the case where the set $\{ x \in \Omega \setminus \Omega_0; u_0(x)=0 \}$
has zero Lebesgue measure. This non-degeneracy condition is
very restrictive but it is still an open question to know how it can be weakened in the context of inverse coefficients problems with a finite number of data observations.

Notice that in the framework of the Bukhgeim-Klibanov method in a bounded spatial domain $\Omega$, it is crucial that $|u_0|$ be bounded from below by a positive constant, uniformly in $\Omega$. But since $\Omega$ is infinitely extended here, such a statement is incompatible with the square integrability property satisfied by $u_0$ in $\Omega$. Therefore the usual non-degeneracy condition imposed on the initial condition function has to weakened into \eqref{1.11}. In the same spirit we point out that the derivation of a Carleman estimate in an unbounded domain such as $\Omega$ is not straightforward and does not directly follows from the corresponding known results in bounded domains.

The subset  $\{ x \in \Gamma,\ (x-x_0) \cdot \nu \geq 0 \}$, lying in the shadow of the
boundary $\Gamma$ viewed from a point $x_0$ taken in the complement set of $\overline{\Omega}$, satisfies the geometric control property introduced by
Bardos, Lebeau and Rauch in \cite{[BLR]}. This property is essentially a necessary and sufficient
condition for exact controllability and stabilization of wave equations. However, due to infinite speed of propagation in the Schr\"odinger equation, this
concept is not completely natural in the context of quantum systems. Nevertheless, Lebeau proved in
\cite{[L1]} that the above mentioned condition guarantees the boundary controllability of the Schr\"odinger
equation in $H^{-1}(\Omega)$ with $L^2(\Omega)$ boundary controls.

The remainder of the paper is organized as follows. In section 2, we establish a Carleman inequality for the Schr\"odinger equation and we state a stability estimate for unique continuation. These two results are needed in the proof of Theorem \ref{thm-inv}, which is given in Section 3. Finally, section 4 contains the proof of the logarithmic observation inequality stated in Section 2. 

\section{Preliminary Estimates}
\setcounter{equation}{0}
In this section we state two preliminary PDE estimates which are the main ingredients in the analysis of the inverse problem under study. To this end we introduce the following notations used throughout the entire text.
We consider three open subsets $\omega_j$, $j=1,2,3$, of $\omega_0$, obeying
\bel{2.1}
\omega_j \subsetneq \omega_{j-1}\ \mbox{and}\ \pd \omega \subset \pd \omega_j,
\ee
and we set
$$
\Omega_j:=( \omega \setminus \overline{\omega_j} )\times\R = \Omega \setminus ( \overline{\omega_j} \times\R )\ \mbox{and}\
Q_j:=\Omega_j \times (-T,T).
$$


\subsection{A Carleman estimate for the Schr\"odinger equation}
A Carleman estimate is a weighted $L^2$-norm inequality for a PDE solution. It is particularly useful for proving uniqueness in Cauchy problems or unique continuation results for PDEs with non-analytic
coefficients. Carleman estimates are also well adapted to energy estimation in PDEs (see e.g. Kazemi and Klibanov
\cite{[KK]} or Klibanov and Malinsky \cite{[KM]}). An alternative method for the derivation of energy inequalities, which is not applicable to the problem under consideration in this paper, can be found in \cite{[BLR]}.

It is Carleman who first derived in his pioneering paper \cite{[Carleman]}, a suitable inequality, which was later called a Carleman estimate, for proving uniqueness in a two-dimensional elliptic Cauchy problem. Since then, Carleman estimates have been extensively studied by numerous mathematicians. For the general theory of Carleman inequalities for PDEs with isotropic (resp. anisotropic) symbol and compactly supported functions, we refer to H\"ormander \cite{[H]} (resp. Isakov \cite{[I1]}).
For Carleman estimates with non-compactly supported functions, see Tataru
\cite{[Ta]}, Bellassoued \cite{[Be2]}, Fursikov
and Imanuvilov \cite{[FI]}, and Imanuvilov \cite{[Ima]}. Notice that a direct derivation of pointwise Carleman estimates for hyperbolic equations, which are applicable to non compactly supported functions, is available in Klibanov and Timonov's paper \cite{[KT]}.

Although Carleman estimates for Schr\"odinger operators in a bounded domain are rather classical (see e.g. 
\cite{[Albano], [BP], [Ta2]}), we seek in the context of this paper, a Carleman inequality for the operator
\bel{2.3}
P:= L+p\ \mbox{with}\ L:=-i\partial_t-\Delta,
\ee
acting in the infinite cylinder $\Omega$. We start by defining suitable weight functions. To this end 
we fix $x_0' \in \R^{n-1}\backslash\overline{\omega}$ and put
\bel{beta-tilde}
\tilde{\beta}(x'):=\abs{x'-x_0'}^2,\ x' \in \omega,
\ee
in such a way that $\tilde{\beta} \in \mathcal{C}^4(\overline{\omega})$. Here $| x' |$ denotes the Euclidian norm of $x' \in \R^{n-1}$.
Next, for every $x=(x',x_n) \in \Omega$, we set
\bel{2.4}
\beta(x):= \widetilde{\beta}(x')+K,\ {\rm where}\ K:= r \|\tilde{\beta}\|_{L^{\infty}(\omega)}\ {\rm for\
some}\ r>1, 
\ee
and we define two weight functions associated with the parameter $\lambda>0$: 
\bel{2.5}
\varphi(x,t):=\frac{e^{\lambda  \beta(x)}}{(T+t)(T-t)}\ {\rm and}\ \eta(x,t):=\frac{e^{2\lambda K} -
e^{\lambda \beta(x)}}{(T+t)(T-t)},\ (x,t) \in Q :=\Omega \times (-T,T). 
\ee
Finally, for all $s>0$, we denote by  $M_1$ (resp. $M_2$) the adjoint (resp. skew-adjoint) part of the operator $e^{-s \eta} L e^{s \eta}$, acting in $(\mathcal{C}_0^{\infty})'(Q)$, i.e.
\bel{2.6}
M_1 : = i \partial_t +
\Delta  + s^2 |\nabla \eta |^2\ {\rm and}\ M_2: = i s (\p_t \eta) + 2 s \nabla \eta \cdot \nabla  + s (\Delta \eta), 
\ee
where we recall that $L$ is the principal part of the operator $P$ given by \eqref{2.3}. 

Having said that, we now state the following global Carleman estimate for the operator $P$.

\begin{proposition}
\label{P2.2} 
Let $\beta$, $\varphi$ and $\eta$ be given by \eqref{beta-tilde}--\eqref{2.5}, and let the operators $M_j$, 
$j=1,2$, be defined by \eqref{2.6}.
Then there are two constants $s_0>0$ and $C>0$, depending only on $\omega$ and $T$,
such that the estimate
\bea
& & s  \|  e^{-s \eta}  \nabla_{x'} w   \|_{L^2(Q_2)}^2
+s^3  \| e^{-s \eta} w  \|_{L^2(Q_2)}^2  + \sum_{j=1,2} \|   M_j  e^{-s \eta} w \|_{L^2(Q_2)}^2  \nonumber \\
& \leq &  C  \left(   \|   e^{-s \eta}   P  w  \|_{L^2(Q)}^2 +  \|  e^{-s \eta}  \nabla_{x'} w   \|_{L^2(Q_3\setminus Q_2)}^2
+  \| e^{-s \eta} w  \|_{L^2(Q_3\setminus Q_2)}^2 \right), \label{2.7}
\eea
holds for all $s \geq s_0$ and any function $w \in L^2(-T,T;  H^1_0( \Omega ) )$ verifying $P w \in L^2(Q)$. 
\end{proposition}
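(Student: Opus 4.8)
The plan is to derive the estimate \eqref{2.7} by the classical conjugation-and-integration-by-parts method, adapted carefully to the infinite cylinder so that the weight $\eta$, which decays in $x_n$ through $\tilde\beta(x')$ being independent of $x_n$, does not interfere with integrability at infinity. First I would set $v:=e^{-s\eta}w$ and write $e^{-s\eta}Pw = e^{-s\eta}Lw + e^{-s\eta}pw = (M_1+M_2)v + e^{-s\eta}pw$, where $M_1,M_2$ are the self-adjoint and skew-adjoint parts from \eqref{2.6}. Since $\|p\|_{L^\infty}\le M$, the zeroth-order term $e^{-s\eta}pw$ can be absorbed into the left-hand side once the leading $s^3$-term is produced, so the heart of the matter is the estimate for the principal operator $L$. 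Expanding $\|(M_1+M_2)v\|_{L^2(Q_2)}^2 = \|M_1 v\|^2 + \|M_2 v\|^2 + 2\,\Pre{(M_1 v, M_2 v)_{L^2(Q_2)}}$, the whole game reduces to computing the cross term $(M_1 v, M_2 v)$ and showing it is bounded below (up to boundary and lower-order remainders) by a positive multiple of $s\|\nabla_{x'}v\|^2 + s^3\|v\|^2$.

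Next I would carry out the integration by parts in the cross term on $Q_2$, treating each pairing of the four constituents of $M_1$ with the three of $M_2$. The dominant positive contributions come from pairing the $s^2|\nabla\eta|^2$ term of $M_1$ against the first-order transport term $2s\nabla\eta\cdot\nabla$ of $M_2$ (yielding the $s^3|v|^2$ weight after moving a derivative onto $\eta$ and using that $|\nabla\eta|^2$ is, through $\nabla\tilde\beta = 2(x'-x_0')$, bounded below by a positive constant on $\overline\omega$ since $x_0'\notin\overline\omega$), and from pairing the Laplacian $\Delta$ of $M_1$ against the same transport term to recover the gradient weight $s|\nabla_{x'}v|^2$. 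The pseudoconvexity of $\tilde\beta$ — here reflected in the fact that the Hessian of $\tilde\beta$ is $2I_{x'}$, so the relevant positivity on the $x'$-variables is automatic — is what makes these terms sign-definite. The terms carrying the $t$-derivatives of $\eta$ are lower order in $s$ (each $\p_t\eta$ costs a power of the blow-up factor $(T^2-t^2)^{-1}$ but no extra power of $s$) and, using the standard feature that $\varphi$ and $\eta$ blow up at $t=\pm T$, the temporal boundary terms vanish. The crucial structural point is that because $\tilde\beta$ depends only on $x'$, the weight $\nabla\eta$ has vanishing $x_n$-component, so no positive lower bound on the $\p_{x_n}$-derivative is generated; this is precisely why only $\nabla_{x'}$ appears on the left of \eqref{2.7} and why the estimate survives on the unbounded cylinder without any decay assumption in $x_n$.

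The main obstacle, and the reason \eqref{2.7} has the unusual residual terms $\|e^{-s\eta}\nabla_{x'}w\|^2_{L^2(Q_3\setminus Q_2)}$ and $\|e^{-s\eta}w\|^2_{L^2(Q_3\setminus Q_2)}$ on its right-hand side, is the handling of the \emph{lateral} boundary integrals over $\p\Omega_2\times(-T,T)$ produced by the integrations by parts in the $x'$-variables. On the physical boundary $\Gamma\cap\overline{\Omega_2}=\p\omega\times\R$ the Dirichlet condition $w\in H^1_0(\Omega)$ kills $w$ and its tangential derivatives, and the normal-derivative boundary terms there carry the favorable sign dictated by $(x'-x_0')\cdot\nu\le 0$ on $\p\omega$ viewed from $x_0'\notin\overline\omega$ (so that they can be discarded), exactly as in the Baudouin--Puel computation. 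The genuinely new artificial boundary, however, is the interface $\p\omega_2\times\R$ separating $\Omega_2$ from the region $\overline{\omega_2}\times\R$ where no information is available; across it $w$ need not vanish and its derivatives are not controlled by the equation, so the boundary integrals there cannot be signed and must instead be estimated in absolute value. I would bound them by a cutoff argument: since $\omega_2\subsetneq\omega_3$ with $\p\omega\subset\p\omega_2\cap\p\omega_3$, one inserts a smooth cutoff equal to $1$ on $\Omega_2$ and supported in $\Omega_3$, applies the interior Carleman identity to the cutoff solution, and collects all commutator terms — supported precisely in the collar $\Omega_3\setminus\overline{\Omega_2}$, i.e. in $Q_3\setminus Q_2$ after adjoining time — into the two residual norms on the right of \eqref{2.7}. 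After choosing $s_0$ large enough to absorb every term of order lower than $s$ in the gradient weight and lower than $s^3$ in the $L^2$ weight (including the potential term and the commutators) into the left-hand side, one arrives at \eqref{2.7}; returning from $v$ to $w$ via $v=e^{-s\eta}w$ and noting $M_j v = M_j e^{-s\eta}w$ completes the argument.
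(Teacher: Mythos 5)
Your operative mechanism is the same as the paper's: the estimate is obtained by applying a Carleman estimate to $\chi w$, where $\chi=\chi(x')$ equals $1$ on the core region and vanishes on a collar of $\p\omega$, and by writing $P(\chi w)=\chi Pw+[P,\chi]w$, with $[P,\chi]$ a first-order operator supported in the transition zone --- which is exactly what produces the residual terms on $Q_3\setminus Q_2$. The only structural difference is that the paper does not re-derive the conjugated estimate: it invokes \cite[Proposition 3.3]{KPS1}, i.e.\ estimate \eqref{2.8}, valid for any $\tilde w\in L^2(-T,T;H^1_0(\Omega))$ with $P\tilde w\in L^2(Q)$ and $\p_\nu\tilde w=0$ on $\Sigma$, a condition that $\chi w$ satisfies trivially because it vanishes identically near $\Gamma$. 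Your plan to reprove that estimate by the $M_1$/$M_2$ cross-term computation is the standard route (it is how \cite{KPS1} proceeds), and your two observations --- that $\nabla\eta$ has no $x_n$-component, and that $|\nabla\tilde\beta|$ is bounded below on $\overline{\omega}$ because $x_0'\notin\overline{\omega}$ --- are indeed what makes the computation survive on the unbounded cylinder; citing the result instead spares the paper the justification of the integrations by parts at $|x_n|\to\infty$ for $w$ of such low regularity.

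One assertion in your boundary-term discussion, however, is false and must be removed. You claim that $(x'-x_0')\cdot\nu\le 0$ holds on $\p\omega$ because $x_0'\notin\overline{\omega}$, so that the normal-derivative boundary terms on $\Gamma$ can be discarded ``as in Baudouin--Puel''. For $x_0'$ outside $\overline{\omega}$ the function $x'\mapsto(x'-x_0')\cdot\nu(x')$ changes sign on $\p\omega$: the set where it is nonnegative is precisely the nonempty Bardos--Lebeau--Rauch observation region, which is why Baudouin and Puel must \emph{keep} the Neumann trace there as data rather than discard it. If your proof actually needed this sign condition, it would fail. It does not, for two reasons you should make explicit. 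First, the geometry is inverted in your write-up: since $\p\omega\subset\p\omega_2$, the set $\omega_2$ is a collar neighborhood of $\p\omega$, so $\Omega_2=\Omega\setminus(\overline{\omega_2}\times\R)$ is the core of the cylinder, separated from $\Gamma$ by that collar; your identity $\Gamma\cap\overline{\Omega_2}=\p\omega\times\R$ is incorrect (and the inclusion ``$\omega_2\subsetneq\omega_3$'' is backwards, the paper has $\omega_3\subsetneq\omega_2$). Second, and decisively, once the Carleman identity is applied to the cutoff function $\chi w$, which vanishes identically in a neighborhood of $\Gamma$, no boundary terms on $\Sigma$ arise at all, so no sign condition anywhere on $\p\omega$ is needed. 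This is exactly the point of the paper's standing hypothesis that the potential is known near $\Gamma$: it permits the localization that dispenses with the geometric control condition. With the sign-condition sentence deleted and the argument run entirely through the cutoff, your proof is correct and coincides with the paper's.
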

\begin{proof}
The proof boils down to \cite[Proposition 3.3]{KPS1}, which provides two constants $s_0>0$ and $C>0$, both of them depending only on $\omega$, $T$ and $M$, such that we have 
\bel{2.8}
s  \|  e^{-s \eta}  \nabla_{x'} \tilde{w}   \|_{L^2(Q)}^2
+s^3  \| e^{-s \eta} \tilde{w}  \|_{L^2(Q)}^2  + \sum_{j=1,2} \|   M_j  e^{-s \eta} \tilde{w} \|_{L^2(Q)}^2 
 \leq   C  \|   e^{-s \eta}   P  \tilde{w}  \|_{L^2(Q)}^2 ,
\ee
for every $s \geq s_0$, and any $\tilde{w} \in L^2(-T,T; H^1_0( \Omega ) )$ obeying $P \tilde{w} \in L^2(Q)$ and $\partial_{\nu} \tilde{w} =0$ on $\Sigma$.
Next we pick a cut-off function $\chi \in \mathcal{C}^\infty(\R^{n-1},[0,1])$ satisfying
$$
\chi(x')=\left\{\begin{array}{ll} 1 & \mbox{if}\ x'\in \omega \setminus \omega_2 \\
0 & \mbox{if}\ x'\in \omega_3,
\end{array}
\right.
$$
and apply the estimate \eqref{2.8} to $\tilde{w}(x,t)=\chi(x') w(x,t)$. Using that $P\tilde{w}=\chi P w+ [P,\chi] w$, where
$[A,B]$ stands for the commutator of the operators $A$ and $B$, and taking into account that $[P,\chi]$ is a first order differential operator whose coefficients are supported in $\Omega_3 \setminus \Omega_2$, we obtain \eqref{2.7}.
\end{proof}

\subsection{Logarithmic stability of unique continuation}
The unique continuation of a solution to the Schr\"odinger equation \eqref{1.1} from lateral boundary data
on $\Gamma_*$ was proved by Phung in \cite{[Ph]}. The coming result claims stability for the same problem.

\begin{lemma}
\label{L2.1}
Let $p_j \in \mathcal{A}(p_0,\omega_0)$ for $j=1,2$, let $u_j$ be a
solution to the Schr\"odinger equation \eqref{1.1} where $p_j$ is substituted for $p$, and put $u:=u_1-u_2$. Then for each $T>0$
and all $\mu \in (0,1)$, we may find a constant $C=C(\omega,\omega_0,T,M,M',\mu)>0$, depending neither on $p_1$ nor on $p_2$, such that we have
\bel{2.11}
\| \pd_t u \|_{L^2( (\Omega_3\setminus\Omega_2) \times (0,T))}^2 + \| \nablas
\pd_t u \|_{L^2( ( \Omega_3\setminus\Omega_2) \times (0,T) )}^2 \leq C \para{\norm{\p_\nu
u}_*^2+\abs{\log \norm{\p_\nu u}_*}^{-1}}^{2\mu N}.
\ee
\end{lemma}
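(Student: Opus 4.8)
The plan is to convert the unique continuation problem for the Schr\"odinger equation into a parabolic one through the Fourier--Bros--Iagolnitzer transform, and to exploit that parabolic Carleman estimates require no geometric condition on the observation region. First I would pass to the time derivative. Since $p_1=p_2=p_0$ on $\Omega_0$, which contains $\Omega_3\setminus\Omega_2$, the difference $u=u_1-u_2$ solves the homogeneous equation $-i\p_t u-\Delta u+p_0u=0$ in $\Omega_0\times(0,T)$; as $p_0$ is time independent, so does $v:=\p_t u$. Moreover $v=0$ on $\Gamma\times(0,T)$ and
\[
\norm{\p_\nu v}_{L^2(\Sigma_*)}=\norm{\p_t\p_\nu u}_{L^2(\Sigma_*)}\leq\norm{\p_\nu u}_*,
\]
which is precisely why the observation norm carries one time derivative. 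Proposition \ref{L1.2} with $k=N+1$ bounds $v$ in $\mathcal{C}^0([0,T],H^{2N}(\Omega))$ by $CM'$, an a priori estimate I shall repeatedly use to absorb remainders. It thus suffices to establish the inequality for $v$ in place of $\p_t u$.

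Next I would apply the FBI transform $\FF$, following the approach of Robbiano \cite{[Ro]}. Fixing a time cut-off $\theta$ supported in $(0,T)$ and setting $V:=\FF(\theta v)$, an integration by parts in $t$ combined with the identity $\p_t v=i(\Delta-p_0)v$ shows that, regarded as a function of the imaginary time variable $\sigma$, $V$ satisfies a parabolic equation
\[
\p_\sigma V-\Delta V+p_0 V=R,
\]
whose source $R$ gathers the commutator terms $\FF(\theta' v)$. These are localized near the endpoints of $(0,T)$ and are controlled by the energy bound above. By construction $V=0$ on $\Gamma$, while $\p_\nu V=\FF(\theta\,\p_\nu v)$, so that the Neumann trace of $V$ on $\Sigma_*$ is governed by $\norm{\p_\nu u}_*$.

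I would then invoke a global Carleman estimate for the parabolic operator $\p_\sigma-\Delta+p_0$, localized near $\Gamma$ by a cut-off in $x'$ as in the proof of Proposition \ref{P2.2}. The decisive point is that such parabolic estimates hold with observation on any nonempty relatively open subset $S_*$ of $\p\omega$, with no geometric control condition in the spirit of \cite{[BLR]}. This produces a quantitative (H\"older type) unique continuation bound for $V$ over the region corresponding to $Q_3\setminus Q_2$, in terms of $\norm{\p_\nu V}_{L^2(\Sigma_*)}$ and the source $R$.

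The main obstacle, as I see it, is the inversion of the FBI transform together with the optimization of its parameter. One must balance the amplification factor coming from the Carleman/interpolation inequality against the Gaussian tail of $\FF$ and the cut-off remainder $R$, both of which become exponentially small in the parameter $\gamma$ once the $\mathcal{C}^0([0,T],H^{2N}(\Omega))$ bound on $v$ is used. Choosing $\gamma$ of order $\abs{\log\norm{\p_\nu u}_*}$ equilibrates these competing effects and upgrades the H\"older estimate to the logarithmic modulus $\big(\norm{\p_\nu u}_*^2+\abs{\log\norm{\p_\nu u}_*}^{-1}\big)^{2\mu N}$, the exponent $2\mu N$ reflecting the interpolation parameter $\mu\in(0,1)$ together with the order $2N$ of the a priori energy bound used to dominate the tail. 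Rigorously estimating $R$ and the discrepancy between $v$ and the inverse transform of $V$, and carrying out this final balance, is the technical heart of the argument.
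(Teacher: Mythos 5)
Your strategy is the one the paper actually follows: pass to $v=\p_t u$, apply the FBI transform in time to obtain a parabolic equation in the imaginary-time variable, invoke a parabolic Carleman estimate that requires no geometric control condition, and optimize the FBI parameter $\gamma\sim\abs{\log \norm{\p_\nu u}_*}$. There is, however, a genuine gap at the decisive step. The parabolic Carleman estimate on $\Omega_0=\omega_0\times\R$ (Lemma \ref{L4.2}) applies to functions vanishing on the \emph{whole} boundary $\p\Omega_0\times(-1,1)$, whereas your $V=\FF(\theta v)$ vanishes only on the outer part $\Gamma$: nothing is known about $v$, hence about $V$, on the inner boundary $\Gamma_\sharp=S_\sharp\times\R$ with $S_\sharp=\p\omega_0\setminus\p\omega$. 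One must therefore cut off in $x'$ near $\Gamma_\sharp$ (the function $\chi$ of \eqref{4.12}), and the resulting commutator term $[\Delta,\chi]v$ is \emph{not} exponentially small in $\gamma$: by \eqref{b1}, its FBI transform $A_{\gamma,t}$ grows like $e^{\delta'\gamma}$ (estimate \eqref{4.24}), and the energy bound only supplies the prefactor --- the opposite of your assertion that the cut-off remainders ``become exponentially small in the parameter $\gamma$ once the $\mathcal{C}^0([0,T],H^{2N}(\Omega))$ bound on $v$ is used.'' That assertion is valid only for the time cut-off term $\FF(\theta'v)$, and there the smallness \eqref{4.23} comes from the kernel decay \eqref{b1b} (the support of $\theta'$ lies at distance $\geq T_0$ from the real times of interest), not from the energy bound. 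The spatial commutator is instead absorbed by the geometry of the weight: $\psi_0$ vanishes on $S_\sharp$, satisfies $\psi_0\leq\beta_0$ on the support of $\nabla\chi$ but $\psi_0\geq 2\beta_0$ on $\omega_2\setminus\omega_3$ (see \eqref{4.6}--\eqref{4.7}), so that $e^{\s\alpha}A_{\gamma,t}$ carries a factor $e^{-2\mu_1\s}$ while the target region only loses $e^{-2\mu_2\s}$ with $\mu_2<\mu_1$; and only after coupling the Carleman parameter to the FBI parameter, $\s=T_0\gamma$ with $T_0$ large, does $e^{-2(\mu_1-\mu_2)T_0\gamma}e^{\delta'\gamma}\leq e^{-\delta_2\gamma}$ hold (Lemma \ref{L4.4}). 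Without this weight differential and this coupling, the right-hand side of your Carleman inequality contains a term that diverges exponentially in $\gamma$, and the final optimization fails.

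Two further points you gloss over. First, the Carleman estimate controls $w_{\gamma,t}(x,\cdot)$ in the \emph{imaginary}-time variable $\tau\in(-\varepsilon,\varepsilon)$ at each fixed real time $t$, whereas the inversion step compares $v$ with $w_\gamma(x,t)=w_{\gamma,t}(x,0)$ over a range of \emph{real} times; the paper bridges the two by the Cauchy mean-value formula for the analytic function $z\mapsto w_\gamma(x,z)$ (Lemma \ref{L4.5}), a step absent from your outline, which conflates the two time variables. Second, be careful with the bookkeeping of small terms: the discrepancy between $\theta\widetilde{w}$ and $w_\gamma$ is only \emph{polynomially} small, of order $\gamma^{-\mu N}$ (Lemma \ref{L4.6}, using $1-e^{-y^{2m}}\leq Cy^N$ and the $\mathcal{C}^N$-in-time energy bound); it is precisely this polynomial error, not any exponentially small quantity, that forces the logarithmic rate $\abs{\log\norm{\p_\nu u}_*}^{-2\mu N}$ after setting $\gamma=\abs{\log\norm{\p_\nu v}_*}\slash\delta_5$. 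Finally, the parabolic Carleman estimate cannot simply be ``invoked'' from the bounded-domain literature: on the unbounded cylinder $\omega_0\times\R$ it has to be established, which is the object of the paper's Appendix.
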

The proof of this result boils down to the analysis carried out by Robbiano in \cite{[Ro],[Ro2]} or
Phung in \cite{[Ph]}, by means of the FBI transformation. Since it is rather lengthy, we postpone it to Section 4.


\section{Proof of Theorem \ref{thm-inv}}
\label{sec-inv}
In this section we establish the stability estimate \eqref{1.12} by adapting the Bukhgeim-Klibanov method presented in \cite{[BK]}, to the context of the infinite waveguide $\Omega$.
The first step involves linearizing the system \eqref{1.1}--\eqref{1.3} and symmetrizing its solution with respect to the time variable $t$.

\subsection{Linearization and time symmetrization}
\label{sec-linearization}
With reference to the notations of Theorem \ref{thm-inv} we put $p:=p_2-p_1$ and notice that $u:=u_1-u_2$ is a $\mathcal{H}^{N+1}$-solution to the IBVP
\bel{3.1}
\left\{
\begin{array}{rcll}
-i \p_t u-\Delta u + p_1 u & = &  p u_2 & \mbox{in}\ \Omega \times (0,T) \\
  u(\cdot,0) & = & 0, & \mbox{in}\ \Omega\\ 
  u & = & 0, &  \mbox{in}\ \Gamma \times (0,T).
\end{array}
\right.
\ee
In particular we have $u \in \mathcal{C}^1([0,T];H^{2 N}(\Omega))$, hence upon differentiating \eqref{3.1} with respect to $t$, we get that $v:= \pd_t u \in {\mathcal H}^{N}$ is solution to the system
\bel{3.2}
\left\{
\begin{array}{rcll} 
-i \pd_t v-\Delta v + p_1 v &= & p \pd_t u_2 & \mbox{in}\ \Omega \times (0,T) \\
v(\cdot,0)  &= & i p u_0, & \mbox{in}\ \Omega \\
v & = & 0, & \mbox{on}\ \Gamma \times (0,T).
\end{array}
\right.
\ee
Further, putting $u_2(x,-t):=\overline{u_2(x,t)}$ for all $(x,t) \in \Omega \times (0,T]$ and bearing in mind that $u_0$ and $p$ are real-valued, we deduce from \eqref{3.2} that the function $v$, extended on $[-T,0) \times \Omega$ by setting $v(x,t):=-\overline{v(x,-t)}$, is the 
$\cap_{k=0}^{N} \mathcal{C}^k([-T,T],H^{2(N-k)}(\Omega))$-solution to the system
\bel{3.3}
\left\{
\begin{array}{rcll} 
-i \pd_t v-\Delta v + p_1 v & = & p \pd_t u_2 & \textrm{in}\ Q=\Omega  \times (-T,T) \\ 
  v(\cdot,0)  & = & i p u_0, & \textrm{in}\ \Omega \\ 
  v & = & 0, &  \textrm{on}\ \Sigma:=\Gamma \times (-T,T).
\end{array}
\right.
\ee
The second step in the derivation of \eqref{1.12} is to apply the global Carleman inequality of Proposition \ref{P2.2} to $v$ in order to establish Lemma \ref{L3.1} stated below.

\subsection{An {\it a priori} estimate}
We stick with notations of Subsection \ref{sec-linearization} and establish the following technical result, which is quite similar to \cite[Lemmas 3.3 \& 3.4]{KPS1} and \cite[Lemma 3.3]{KPS2}. Nevertheless we include the proof just for the convenience of the reader.

\begin{lemma}
\label{L3.1}
Let $v$ denote the $\mathcal{C}^1([-T,T],L^2(\Omega)) \cap \mathcal{C}^0([-T,T],H_0^1(\Omega) \cap H^2(\Omega))$-solution to \eqref{3.3}.
Then there exists a constant $C>0$, independent of $s$, such that we have
$$
\| e^{-s\eta(\cdot,0)} p u_0 \|_{L^2(\Omega)}^2 \leq C  s^{-3 \slash 2} \left(\| e^{-s\eta(\cdot,0)} p \pd_t u_2 \|_{L^2(Q)}^2 + 
\|  e^{-s \eta}  \nabla_{x'} v   \|_{L^2(Q_3 \setminus Q_2)}^2
+ \| e^{-s \eta} v  \|_{L^2(Q_3 \setminus Q_2)}^2 \right),
$$
uniformly in $s \in (0,+\infty)$. 
\end{lemma}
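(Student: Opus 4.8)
The plan is to run the Bukhgeim--Klibanov scheme attached to the global Carleman estimate, the real work being to transfer the weighted mass of the solution at the initial time $t=0$ into a space-time integral that the Carleman inequality controls. Let $\chi$ be the cut-off used in the proof of Proposition \ref{P2.2}, and set $\tilde v:=\chi v$ and $\psi:=e^{-s\eta}\tilde v$. Since $\chi\equiv1$ on a neighborhood of $\mathrm{supp}\,p$ (recall $p=p_2-p_1$ vanishes on $\Omega_0$, while $\chi=1$ on $(\omega\setminus\omega_2)\times\R\supset\mathrm{supp}\,p$), the initial condition in \eqref{3.3} yields $\psi(\cdot,0)=e^{-s\eta(\cdot,0)}\chi v(\cdot,0)=i\,e^{-s\eta(\cdot,0)}pu_0$, so that $\|\psi(\cdot,0)\|_{L^2(\Omega)}=\|e^{-s\eta(\cdot,0)}pu_0\|_{L^2(\Omega)}$ is exactly the quantity to be estimated. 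Moreover $\psi=0$ on $\Sigma$, and $\psi(\cdot,\pm T)=0$ because $\eta(x,t)\to+\infty$ as $t\to\pm T$, the numerator $e^{2\lambda K}-e^{\lambda\beta(x)}$ being positive since $r>1$ forces $2K>\beta(x)$.

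First I would record a purely algebraic energy identity for $M_1$. As $M_1=i\partial_t+\Delta+s^2|\nabla\eta|^2$ and $\psi$ vanishes on $\Gamma$, integrating $\Delta\psi$ by parts over $\Omega$ shows that $\int_\Omega\overline\psi\Delta\psi\,dx$ and $\int_\Omega|\nabla\eta|^2|\psi|^2\,dx$ are real, so that $\mathrm{Im}\int_\Omega\overline\psi M_1\psi\,dx=\mathrm{Re}\int_\Omega\overline\psi\,\partial_t\psi\,dx=\tfrac12\tfrac{d}{dt}\|\psi(\cdot,t)\|_{L^2(\Omega)}^2$; this uses no equation, only the structure of $M_1$. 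Integrating on $(-T,0)$ and using $\psi(\cdot,-T)=0$ gives $\|\psi(\cdot,0)\|_{L^2(\Omega)}^2=2\int_{-T}^0\mathrm{Im}\int_\Omega\overline\psi M_1\psi\,dx\,dt$, whence, by Cauchy--Schwarz in space and time, $\|\psi(\cdot,0)\|_{L^2(\Omega)}^2\le 2\|\psi\|_{L^2(Q)}\|M_1\psi\|_{L^2(Q)}$. Splitting this product by Young's inequality weighted with the parameter $s^{-3/2}$ produces the decisive bound
$$
\|e^{-s\eta(\cdot,0)}pu_0\|_{L^2(\Omega)}^2\le s^{-3/2}\Big(s^3\|\psi\|_{L^2(Q)}^2+\|M_1\psi\|_{L^2(Q)}^2\Big),
$$
in which the announced power $s^{-3/2}$ already appears.

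It remains to control $s^3\|\psi\|_{L^2(Q)}^2+\|M_1\psi\|_{L^2(Q)}^2$ over the whole cylinder $Q$. For this I would apply the global Carleman estimate \eqref{2.8} established in the course of proving Proposition \ref{P2.2} (before localization to $Q_2$) to $\tilde v=\chi v$, which is legitimate since $\tilde v\in L^2(-T,T;H^1_0(\Omega))$ and $\partial_\nu\tilde v=0$ on $\Sigma$ (both $\chi$ and $\nabla\chi$ vanish near $\Gamma$). As $P\tilde v=\chi\,Pv+[P,\chi]v=\chi\,p\,\partial_t u_2+[P,\chi]v$, with $[P,\chi]$ a first-order operator in $x'$ supported in $\Omega_3\setminus\Omega_2$, the right-hand side $\|e^{-s\eta}P\tilde v\|_{L^2(Q)}^2$ is dominated by $C\big(\|e^{-s\eta}\chi\,p\,\partial_t u_2\|_{L^2(Q)}^2+\|e^{-s\eta}\nabla_{x'}v\|_{L^2(Q_3\setminus Q_2)}^2+\|e^{-s\eta}v\|_{L^2(Q_3\setminus Q_2)}^2\big)$, while its left-hand side dominates $s^3\|\psi\|_{L^2(Q)}^2+\|M_1\psi\|_{L^2(Q)}^2$. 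Inserting this into the displayed inequality, and using $\chi\le1$ together with $\eta(x,t)\ge\eta(x,0)$ (so that $e^{-s\eta(x,t)}\le e^{-s\eta(x,0)}$) to replace the source weight by the frozen one $e^{-s\eta(\cdot,0)}$, I reach precisely the claimed estimate, uniformly in $s$.

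The main obstacle, and the reason the cut-off enters at this stage rather than after localization, is that the energy identity forces control of $\psi$ and $M_1\psi$ on the \emph{full} cylinder $Q$, whereas the localized inequality \eqref{2.7} governs only $Q_2$; the payoff is that the commutator $[P,\chi]v$ is supported exactly on the collar $\Omega_3\setminus\Omega_2$ and thus reproduces the two observation terms without spurious powers of $s$. The remaining delicate bookkeeping is verifying that the endpoint and lateral boundary contributions in the energy identity genuinely vanish, and that the single use of Young's inequality distributes the factor $s^{-3/2}$ uniformly across all three terms on the right.
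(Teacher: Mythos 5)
Your proposal is correct and is essentially the paper's own argument: the same Bukhgeim--Klibanov energy identity for the adjoint part $M_1$ applied to a weighted, cut-off copy of $v$ (which vanishes at $t=-T$ thanks to the blow-up of $\eta$), followed by Cauchy--Schwarz, Young's inequality with the weight $s^{-3/2}$, the Carleman estimate, and the freezing of the source weight at $t=0$ via $\eta(x,t)\geq\eta(x,0)$. The only deviation is organizational: you take the cut-off $\chi$ of Proposition \ref{P2.2} and apply the unlocalized estimate \eqref{2.8} to $\chi v$ directly, re-running the commutator argument that produces \eqref{2.7} (which incidentally spares you the $[M_1,\xi]$ commutator hidden in the paper's intermediate inequality), whereas the paper introduces a separate cut-off $\xi$ (equal to $1$ off $\omega_1$ and to $0$ on $\omega_2$) and invokes Proposition \ref{P2.2} as a black box applied to $v$ itself.
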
 
\begin{proof}
Put $\phi(x,t):=e^{-s \eta(x,t)} \xi(x') v(x,t)$ for $(x,t) \in \Omega \times (-T,T)$, where $\xi \in\mathcal{C}_0^\infty(\omega)$ is a cut-off function satisfying
$$
\xi(x')=\left\{\begin{array}{ll}
1 & \mbox{if}\ x \in \omega \setminus \omega_1 \\
0 & \mbox{if}\ x \in \omega_2.
\end{array}
\right.
$$
In light of \eqref{2.4}-\eqref{2.5} we have $\lim\limits_{\substack{t \downarrow (-T) }} \eta(x,t)= +\infty$ for every $x \in \Omega$, and hence 
$\lim\limits_{\substack{t \downarrow (-T) }} \phi(x,t)= 0$.
As a consequence it holds true that
\begin{equation}\label{3.4}
\| \phi (\cdot,0) \|_{L^2(\Omega)}^2 =  \int_{\Omega \times (-T,0)} \pd_t | \phi |^2(x,t)  dx dt = 2 \Pre{\int_{\Omega \times (-T,0)} (\pd_t \phi) \overline{\phi}(x,t) dx dt} . 
\end{equation}
On the other hand, \eqref{2.6} and the Green formula yield
$$ \Pim {\int_{\Omega \times (-T,0)} (M_1 \phi) \overline{\phi}(x,t) dx dt} = \Pre {\int_{\Omega \times (-T,0)}   (\pd_t \phi) \overline{\phi}(x,t) dx dt}  +  R, $$
where
$$
R= \Pim {
\int_{\Omega \times (-T,0)} (\Delta \phi) \overline{\phi}(x,t) dx dt + s^2 \| (\nabla \eta) \phi  \|_{L^2( \Omega \times (-T,0) )}^2}= -\Pim{\| \nabla \phi \|_{L^2(  \Omega \times (-T,0) )}^2}=0.
$$
We deduce from this, \eqref{3.4} and the identity $\| \phi(\cdot,0) \|_{L^2(\Omega)} = \|  e^{-s \eta(\cdot,0 )}\xi v(\cdot,0 ) \|_{L^2(\Omega)}$, that
\beas
& & \|  e^{-s \eta(\cdot,0)}\xi v(\cdot,0 ) \|_{L^2(\Omega)}^2 = 2  \Pim { \int_{\Omega \times (-T,0)} (M_1 \phi) \overline{\phi}(x,t) dx dt} \leq  2 \| M_1 \phi  \|_{L^2(Q)} \|  \phi  \|_{L^2(Q)} \\
& \leq & s^{-3 \slash 2}  \left(  s\| e^{-s\eta}\nablas v \|_{L^2(Q_2)}^2 + s^3 \| e^{-s \eta} v \|_{L^2(Q_2)}^2 + \|  M_1  e^{-s \eta} v \|_{L^2(Q_2)}^2 \right).
\eeas
Finally, the desired result follows from this upon recalling \eqref{3.3}, applying Proposition \ref{P2.2} to $v$, and noticing from \eqref{2.4}-\eqref{2.5} that $\eta(x,t) \geq \eta(x,0)$ for all $(x,t) \in Q$.
\end{proof}

\subsection{End of the proof} 
For any fixed $y>0$ it follows from Lemma \ref{L3.1} that
\bel{3.6}
\| e^{-s\eta(\cdot,0)} p u_0 \|_{L^2(\omega \times (-y,y))}^2 \leq C s^{-3 \slash 2}\left( \| e^{-s\eta(\cdot,0)} p \pd_t u_2 \|_{L^2(Q)}^2 +  \varrho^2 \right),\ s >0,
\ee
where
$\varrho^2 :=  \| \nabla_{x'} v   \|_{L^2(Q_3 \setminus Q_2)}^2 + \| v  \|_{L^2(Q_3 \setminus Q_2)}^2$. 
Here and in the remaining part of the proof, $C$ denotes a generic positive constant that is independent of $s$. 

Notice from \eqref{1.11} and the vanishing of $p$ in $\Omega_0$ that the inequality $| (p u_0)(x)  | \geq \kappa \langle y \rangle ^{-d_0 \slash 2} \abs{p(x)}$ holds for every $x \in \omega \times (-y,y)$. Furthermore, we have $\| \pd_t u_2 \|_{L^{\infty}(Q)} \leq C$ by Corollary \ref{cor-bounded}, and $\eta(x,0) \geq 0$ for every $x \in \Omega$, by \eqref{2.4}-\eqref{2.5}, so we may deduce from \eqref{3.6} that
\bel{3.6b}
\left( \kappa^2 \langle y \rangle ^{-d_0}-C s^{-3 \slash 2} \right) \| e^{-s \eta(\cdot,0)} p \|_{L^2(\omega \times (-y,y))}^2 \leq 
C s^{-3 \slash 2} \left( \|  p \|_{L^2(\omega \times (\R \setminus (-y,y)))}^2 + \varrho^2 \right),\ s>0.
\ee
Thus, taking $s=(\kappa^2 \slash (2C) )^{-2 \slash 3} \langle y \rangle ^{2d_0\slash 3}$ in \eqref{3.6b} and recalling from \eqref{2.4}-\eqref{2.5} that $\| \eta(.,0) \|_{L^\infty(\Omega)} \leq e^{2 \lambda K} \slash T^2$, we obtain that
\bel{3.7}
\| p \|_{L^2(\omega \times (-y,y))}^2 \leq C  e^{C \langle y \rangle ^{2d_0 \slash 3}}\left( \| p \|_{L^2(\omega \times (\R \setminus (-y,y)))}^2 +\varrho^2 \right).
\ee
Moreover, as $\|  p \|_{L^2(\omega \times (\R \setminus (-y,y)))}^2  \leq C \| e^{-2b \langle \cdot \rangle^{d}} \|_{L^1(\R \setminus (-y,y))}$ from \eqref{1.8}, we have for any $\delta \in (0,b)$,
\bel{3.7b}
\|  p \|_{L^2(\omega \times (\R \setminus (-y,y)))}^2  
\leq C \| e^{-\delta \langle \cdot \rangle^{d} } \|_{L^1(\R)} e^{-(2 b-\delta) \langle y \rangle^{d}}
\leq C e^{-(2 b-\delta) \langle y \rangle^{d}}.
\ee
Putting this together with \eqref{3.7} we find that
\bel{3.8}
\| p \|_{L^2(\omega \times (-y,y))}^2 
\leq C   e^{C \langle y \rangle ^{2d_0 \slash 3}}  \left( e^{-(2b-\delta) \langle y \rangle ^{d}} +\varrho^2 \right).
\ee
Set $\varrho_\delta:=e^{-(2b-\delta)}$ and let us now examine the two cases $\varrho \in  (0,\varrho_\delta)$ and $\varrho \in [ \varrho_\delta , +\infty)$ separately. First, if $\varrho \in (0,\varrho_\delta)$ we take $y=y(\varrho) :=\left( \left(2\frac{\ln \varrho}{\ln \varrho_\delta}\right)^{2 \slash d}-1\right)^{1 \slash 2}$ in \eqref{3.8} so we have $\varrho^2=e^{-(2b-\delta) \langle y \rangle ^{d}}$, and consequently
\bel{3.8b}
\| p \|_{L^2(\omega \times (-y,y))}^2  \leq C  e^{C \langle y \rangle ^{2d_0 \slash 3}-(2b-\delta) \langle y \rangle ^{d}}.
\ee
Since $d>2d_0 \slash 3$ we have $\sup_{t \in (0,1)} e^{Ct^{2d_0 \slash 3}-\delta t^{d}}<+\infty$, whence \eqref{3.8b} yields
\bel{3.9}
\| p \|_{L^2(\omega \times (-y,y))}^2  \leq C \left( \sup_{t \in (1,+\infty)} e^{Ct^{2d_0 \slash 3}-\delta t^{d}} \right) e^{-2(b-\delta) \langle y \rangle ^{d}} \leq  C \varrho^{2 \theta},\ \varrho \in (0,\varrho_\delta),
\ee
with 
\bel{3.9b}
\theta:=\frac{b- \delta}{2b- \delta}\in (0,1 \slash 2).
\ee 
On the other hand, we have
$\| p \|_{L^2(\omega \times (\R \setminus (-y,y)))}^2 \leq C e^{-2(b-\delta) \langle y \rangle ^{d}} \leq  C \varrho^{2 \theta}$ for all $\varrho \in (0,\varrho_\delta)$,
by \eqref{3.7b}. This and \eqref{3.9} entail
\bel{3.11}
\| p\|_{L^2(\Omega)}^2 \leq C \varrho^{2 \theta},\ \varrho \in (0,\varrho_\delta).
\ee
In the case where $\varrho \in [\varrho_\delta,+\infty)$, we use the upper bound $\| p \|_{L^2(\Omega)}^2 \leq 
C \| e^{-2b \langle \cdot \rangle^{d}} \|_{L^1(\R)}$, arising from \eqref{1.8}, and obtain that
\bel{3.12}
\| p \|_{L^2(\Omega)}^2  \leq C \left( \| e^{-2b \langle \cdot \rangle^{d}} \|_{L^1(\R)} \slash \varrho_\delta^{2 \theta} \right) \varrho^{2 \theta} \leq C \varrho^{2 \theta},\ \varrho \in [ \varrho_\delta , +\infty ).
\ee
Now, recalling that $v=\pd_t u$ and applying Lemma \ref{L2.1} to $u$, we get
\bel{3.12b}
\varrho^2\leq C\para{\norm{\p_\nu
u}_*+\abs{\log \norm{\p_\nu u}_*}^{-1}}^{2\mu N},
\ee
for any arbitrary $\mu \in (0,1)$. Therefore, $\theta$ being any real number in $(0, 1 \slash 2)$, according to \eqref{3.9b} and since $\delta$ is arbitrary in $(0,b)$, the estimate \eqref{1.12} follows from \eqref{3.11}--\eqref{3.12b} upon taking $\epsilon=\theta \mu N$.

\section{Logarithmic observability inequality: Proof of Lemma \ref{L2.1}}
\setcounter{equation}{0}

In this section we prove the logarithmic observability inequality stated in Lemma \ref{L2.1}. 

Prior to doing that we recall for further reference from the energy inequality \eqref{est-energie} with $k=N+1$, that for any $p_j \in\mathcal{A}(\omega_0,M)$, $j=1,2$, the solution $v=\partial_t(u_1-u_2)$ to the IBVP \eqref{3.3} satisfies the estimate
\bel{4.3}
\norm{v}_{\mathcal{C}^{N}([-T,T],L^2(\Omega))} + \norm{v}_{\mathcal{C}^{N-1}([-T,T],H^2(\Omega))} \leq
2 C \| u_0 \|_{H^{2(N+1)}(\Omega)},
\ee
where the positive constant $C=C(\omega,\omega_0,T,M,M')>0$ is the same as in \eqref{est-energie}. 

\subsection{A parabolic Carleman estimate for unbounded cylindrical domains}
\label{sec-parabolicCE}
In this subsection we state a parabolic Carleman estimate for the Schr\"odinger equation in unbounded cylindrical domains, which
is needed in the proof of Lemma \ref{L2.1}. To do that, we start by introducing the set
$$ S_\sharp := \partial \omega_0 \setminus \partial \omega\ \mbox{and}\ \Gamma_\sharp := S_\sharp \times \R, $$
and we assume without loss of generality (upon possibly smoothening $\pd \omega_0$ by enlarging $\omega_0$), that $S_\sharp$ is $\mathcal{C}^2$. Then, with reference to \cite[Lemma 2.3]{[IY2]} and its proof, we pick a function $\psi_0 \in \mathcal{C}^2(\overline{\omega}_0)$, obeying the four following conditions:
\bea
\psi_0(x')>0, x' \in \omega_0& \mbox{and} & \abs{\nabla\psi_0(x')}>0,\ x'\in \overline{\omega}_0, \label{iy1} \\
\psi_0(x')=0, x' \in S_\sharp& \mbox{and} & \pd_\nu \psi_0(x') \leq 0,\ x'\in \pd \omega_0 \setminus S_*. \label{iy2}
\eea

Next we put $\ell(\tau):=(1-\tau)(1+\tau)$ for each $\tau \in (-1,1)$, and introduce the two functions
\bel{4.8}
\varphi_0(x',\tau):=\frac{e^{\lambda(\psi_0(x')+a)}}{\ell(\tau)},\ x \in \omega_0,\ \tau\in(-1,1),
\ee
and
\bel{4.9}
\alpha(x',\tau):=\frac{e^{\lambda(\psi_0(x')+a)}-e^{\lambda(\norm{\psi_0}_{L^{\infty}(\omega_0)}+b)}}{\ell(\tau)},\ x\in\omega_0,\ \tau\in(-1,1),
\ee
where $\lambda \in (0,+\infty)$ is a fixed parameter, $\psi_0$ is the function defined by \eqref{iy1}-\eqref{iy2}, and
$$
\norm{\psi_0}_{L^{\infty}(\omega_0)}<a<b<2a-\norm{\psi_0}_{L^{\infty}(\omega_0)}.
$$
Further, in connection with the Schr\"odinger operator $P$ defined in \eqref{2.3}, we consider the formal
parabolic operator in $\Omega_0=\omega_0\times\R$, associated with some fixed parameter $h \in (0,1)$,
\bel{4.10}
\mathcal{L}_h:=h^{-1} \p_\tau-\Delta + p_1.
\ee
We are now in position to state the following Carleman estimate for the operator $\mathcal{L}_h$.

\begin{lemma}
\label{L4.2}
Let $\varphi_0$ and $\alpha$ be defined by \eqref{4.8}-\eqref{4.9}, and for $h \in (0,1)$ fixed, let $\mathcal{L}_h$ be defined by \eqref{4.10}. Then we may find three positive constants $\lambda_0$, $\s_0$ and
$C_0$, such that for every $\lambda \geq \lambda_0$ and $\s \geq \s_0 \slash h$, the estimate
\bea
& & \s \| e^{\s \alpha} \nablas w \|_{L^2(\Omega_0 \times (-1,1))}^2 + \s^3 \| e^{\s \alpha} w \|_{L^2(\Omega_0 \times (-1,1))}^2 \nonumber \\
& \leq & 
C_0 \left( \| e^{\s \alpha}  \mathcal{L}_h w \|_{L^2(\Omega_0 \times (-1,1))}^2 + 
\s \| \varphi_0^{1 \slash 2} e^{\s \alpha} \pd_\nu w \|_{L^2(\Gamma_* \times (-1,1))}^2 \right), \label{4.11}
\eea
holds for $w \in L^2(-1,1;H_0^1(\Omega_0))$ verifying $\mathcal{L}_h w \in L^2(-1,1;L^2(\Omega_0))$ and $\p_\nu w \in
L^2(-1,1;L^2(\Gamma_*))$. Here the constant $C_0>0$ depends continuously on $\lambda$,
$M$, $M'$ and $h$, but is independent of $\s$.
\end{lemma}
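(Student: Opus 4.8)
The plan is to derive \eqref{4.11} from a parabolic Carleman inequality of Fursikov--Imanuvilov type \cite{[FI],[IY2]} on the bounded cross-section $\omega_0$, the key structural feature being that the weights $\varphi_0$ and $\alpha$ depend only on $(x',\tau)$ and not on the unbounded variable $x_n$. First I would dispose of the zeroth-order coefficient. Writing $\mathcal{L}_h = \mathcal{L}_h^0 + p_1$ with $\mathcal{L}_h^0 := h^{-1}\p_\tau - \Delta$ and using $\norm{p_1}_{L^\infty(\Omega_0)}\leq M$, one gets $\norm{e^{\s\alpha}\mathcal{L}_h^0 w}_{L^2}^2 \leq 2\norm{e^{\s\alpha}\mathcal{L}_h w}_{L^2}^2 + 2M^2\norm{e^{\s\alpha}w}_{L^2}^2$; since the last term is controlled by $\s^{-3}$ times the interior term $\s^3\norm{e^{\s\alpha}w}_{L^2}^2$ appearing on the left of \eqref{4.11}, it is absorbed for $\s$ large, so that it suffices to treat the free operator $\mathcal{L}_h^0$.

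Next I would exploit the $x_n$-independence of the weight to transfer the problem to the bounded cross-section. Writing $-\Delta = -\Deltas - \p_{x_n}^2$, the conjugated operator $e^{\s\alpha}\mathcal{L}_h^0 e^{-\s\alpha}$ acquires first- and zeroth-order corrections involving only $\nablas\alpha$ and $\Deltas\alpha$, whereas the $\p_{x_n}^2$ part is left untouched; since $w$ decays in $x_n$, integrating by parts in $x_n\in\R$ produces no boundary contribution and only favorable terms. Equivalently, a partial Fourier transform in $x_n$, which commutes with multiplication by the $x_n$-independent factor $e^{\s\alpha}$, reduces \eqref{4.11} (via Parseval) to a family of parabolic Carleman estimates on $\omega_0\times(-1,1)$ for the operators $h^{-1}\p_\tau - \Deltas + \xi_n^2$, indexed by $\xi_n\in\R$ and to be established uniformly in the nonnegative constant $\xi_n^2$.

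For the bounded-domain estimate I would follow the classical conjugation scheme: set $z := e^{\s\alpha}w$, split the conjugated operator into its symmetric and skew-symmetric parts $P_s$ and $P_a$, and expand $\norm{P_s z + P_a z}_{L^2}^2 = \norm{P_s z}_{L^2}^2 + \norm{P_a z}_{L^2}^2 + 2\Pre{(P_s z,P_a z)}$. Integrating the cross term by parts yields the dominant interior terms $\s\norm{\nablas z}_{L^2}^2 + \s^3\norm{z}_{L^2}^2$ once $\lambda\geq\lambda_0$ and $\s\geq\s_0/h$; here the nondegeneracy $\abs{\nabla\psi_0}>0$ from \eqref{iy1} rules out critical points of the weight, while the factor $h^{-1}$ in front of $\p_\tau$ is what makes the threshold $\s_0/h$ appear. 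The nonnegative constant $\xi_n^2$ enters only $P_s$, and its interaction with the time-derivative part of $P_a$ vanishes after integration in $\tau$, so it yields only controllable contributions and the bound is uniform in $\xi_n$. The temporal boundary terms at $\tau=\pm1$ vanish because $e^{\s\alpha}\to0$ there, the numerator of $\alpha$ being negative by the ordering $a<b$ while $\ell(\tau)\to0^+$.

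Finally I would treat the boundary integral on $\p\omega_0\times(-1,1)$ generated by the integrations by parts. Since $w\in H_0^1(\Omega_0)$, on $\p\omega_0$ we have $\nablas w = (\p_\nu w)\nu$, and the boundary contribution is, up to positive factors, of the form $\s\lambda\int\varphi_0(\p_\nu\psi_0)\abs{\p_\nu w}^2 e^{2\s\alpha}$; on $\p\omega_0\setminus S_*$ the condition $\p_\nu\psi_0\leq0$ from \eqref{iy2} gives it the favorable sign and it is discarded, whereas on $S_*$ it is retained as the observation term $\s\norm{\varphi_0^{1/2}e^{\s\alpha}\p_\nu w}_{L^2(\Gamma_*\times(-1,1))}^2$. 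Keeping track of the continuous dependence of the constants on $\lambda$, $M$, $M'$ and $h$ then gives \eqref{4.11}. I expect the main obstacle to be exactly this boundary bookkeeping---extracting the correct sign on $\p\omega_0\setminus S_*$ from \eqref{iy2} and isolating the single observation term on $S_*$, carried out uniformly in the unbounded variable---with the $h$-dependent scaling $\s\geq\s_0/h$ as a secondary but essential point to monitor throughout.
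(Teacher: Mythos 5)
Your overall strategy---absorb $p_1$ first, then exploit the $x_n$-independence of the weights to reduce, via partial Fourier transform in $x_n$ and Parseval, to a family of parabolic Carleman estimates on $\omega_0\times(-1,1)$ for $h^{-1}\p_\tau-\Deltas+\xi_n^2$, to be proved uniformly in $\xi_n^2\geq 0$---is a legitimate reorganization of the problem (the paper instead runs the conjugation argument directly in the unbounded cylinder, keeping the full Laplacian in the symmetric part). But the crux of either route is precisely where your argument has a hole: the uniformity in $m:=\xi_n^2$. You claim that $m$ enters only $P_s$ and that its interaction with the time-derivative part of $P_a$ vanishes after integration in $\tau$, "so it yields only controllable contributions." The first interaction does vanish, but $P_a$ also contains the transport term $2\s\nablas\alpha\cdot\nablas$, and its pairing with $mz$ gives, after integration by parts,
\begin{equation*}
2\Pre{\int_{-1}^1\!\!\int_{\omega_0} 2\s\,(\nablas\alpha\cdot\nablas z)\, m\overline{z}\, dx' d\tau}
= -2\s m \int_{-1}^1\!\!\int_{\omega_0} (\Deltas\alpha)\abs{z}^2\, dx' d\tau .
\end{equation*}
Since $\Deltas\alpha=\bigl(\lambda^2\abs{\nablas\psi_0}^2+\lambda\Deltas\psi_0\bigr)\varphi_0\geq c\lambda^2\varphi_0>0$ for $\lambda\geq\lambda_0$ (by the nondegeneracy in \eqref{iy1}), this term is negative and of size $\s m\lambda^2$, hence unbounded in $m$, whereas all the good interior terms produced by your cross-term computation are $m$-independent: for fixed $\s,\lambda$ and $m$ large, the inequality you assemble fails. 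The same flaw infects your physical-space variant: integrating by parts in $x_n$ does \emph{not} produce "only favorable terms"; it produces $-2\s\int(\Deltas\alpha)\abs{\p_{x_n}z}^2$, unfavorable for the same sign reason, and with no compensating Hessian term because $\alpha$ has no $x_n$-dependence.

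The missing ingredient is the substitution step the paper applies to its term $S$: keep $\norm{L_2 z}^2$ from the expansion of the square, and rewrite the dangerous bracket through the symmetric part of the equation, namely (in your Fourier picture) $\abs{\nablas z}^2+m\abs{z}^2-\s^2\abs{\nablas\alpha}^2\abs{z}^2=\Pre{(L_2z)\overline z}+\tfrac12\Deltas\abs{z}^2+\s h^{-1}(\p_\tau\alpha)\abs{z}^2$, so that $m$ disappears entirely; the $\Deltas\abs{z}^2$ contribution is integrated by parts against $\Deltas\alpha$ (yielding $\Deltas^2\alpha$ terms controlled by \eqref{A.2b}), and the $(L_2z)\overline z$ contribution is absorbed into $\tfrac14\norm{L_2z}^2$ plus interior terms. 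With this step inserted, your reduction does close uniformly in $\xi_n$, and the rest of your outline (absorption of $p_1$ before the reduction, vanishing of the temporal boundary terms, the sign analysis of $\p_\nu\psi_0$ on $\p\omega_0\setminus S_*$ from \eqref{iy2} with retention of the observation term on $S_*$, the threshold $\s\geq\s_0\slash h$) agrees with the paper's. A secondary overstatement: the cross term alone does not yield the first-order term $\s\norm{\nablas z}^2$, since the Hessian bound \eqref{A.6} only gives $\Ds^2\alpha(\xi',\xi')\geq-c\lambda\varphi_0\abs{\xi'}^2$; as in the paper, which invokes the argument of \cite{[FZ]} at this point, the gradient term must be recovered a posteriori, e.g. by pairing the equation with $\s\varphi_0\overline z$.
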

We stress out that a result similar to Lemma \ref{L4.2} can be found in \cite[Lemma 2.4]{[IY2]} (see also \cite{[FZ], [FI]}) in the context of bounded spatial domains.

The dependence of the various constants appearing in \eqref{4.11}, with respect to the parameter $h \in (0,1)$, is made precise in the derivation of Lemma \ref{L4.2}, which is given in Appendix A.

\subsection{A connection between Schr\"odinger and parabolic equations}

As pointed out by Lebeau and Robbiano \cite{[LR1]}, Robbiano \cite{[Ro2]}, Robbiano and Zuily
\cite{[RZ]} and Phung \cite{[Ph]}, connections between solutions of different types of PDEs may be useful for examining the controllability of numerous Cauchy problems. In this subsection we prove that the FBI transform of $\chi v$, where $v$ is the solution to \eqref{3.3} and $\chi=\chi(x')$ is a suitable cut-off function that will be made precise below, is solution to a parabolic Cauchy problem in $\Omega_0=\omega_0 \times \R$.

Prior to doing that we introduce the FBI transform as defined by Lebeau and Robbiano in \cite{[LR1]}.
To this purpose we fix $\mu \in (0,1)$ and choose $m \in \N^*$ so large that 
\bel{def-m}
2m \geq N\ \mbox{and}\ \rho:=1 -\frac{1}{2m}>\mu.
\ee
Then for any $\gamma \in (1,+\infty)$, the function
\bel{b0}
F_{\gamma}(z):=\frac{1}{2\pi}\int_{\mathbb{R}} e^{iz\eta}e^{-(\eta/\gamma^{\rho})^{2m}}d\eta,\ z\in\C,
\ee
is holomorphic in $\mathbb{C}$, and there exist four positive constants $C_j$, $j=1,2,3,4$, none of them depending on $\gamma$, such that we have
\bel{b1}
\left\vert F_{\gamma}(z)\right\vert \leq C_{1}\gamma^{\rho}e^{C_{2}\gamma\left\vert \text{Im}z\right\vert^{1 \slash \rho}},\ z\in\mathbb{C},
\ee
and
\bel{b1b}
\left\vert F_{\gamma}(z)\right\vert \leq C_{1}\gamma^{\rho}e^{-C_{3}\gamma\left\vert \textrm{Re}z\right\vert ^{1/\rho}},\ z \in \set{z\in\C,\ |\mathrm{Im}z|\leq C_4|\mathrm{Re}z|}.
\ee
Given $T_0 \in ( T \slash 3 , +\infty)$, we consider a cut-off function $\theta \in \mathcal{C}_0^\infty(\R)$ obeying
\bel{b2}
\theta(\eta)=\left\{\begin{array}{ll} 1 & \mbox{if}\ \abs{\eta}\leq 2T_0 \\
0 &  \mbox{if}\ \abs{\eta}\geq 3T_0,
\end{array}
\right.
\ee
and we define the partial FBI transform of $w \in \mathcal{S}(\R^{n+1})$ by
\bel{b3}
w_{\gamma,t}(x,\tau):=\mathscr{F}_\gamma w(x,z)=\int_{\mathbb{R}}F_{\gamma}(z-\eta) \theta(\eta) w(x,h\eta)d\eta,\ z=t-i\tau,
\ee
for all $t \in (-T_0,T_0)$, $\tau \in (-1,1)$, $\gamma \in (1,+\infty)$ and $x \in \R^n$, where $h:=T \slash (3T_0)$. 

Next, taking into account that $\omega_1 \subset \omega_0$, by \eqref{2.1}, we deduce
from the continuity of the function $\psi_0$ introduced in Subsection \ref{sec-parabolicCE}, and from the first part of \eqref{iy1}, that there exists a constant $\beta_0>0$ such that
\bel{4.6}
\psi_0(x') \geq 2 \beta_0,\ x' \in \omega_2 \setminus \omega_3.
\ee
Moreover, due to the vanishing of $\psi_0$ on $S_\sharp$, imposed by the first claim of \eqref{iy2}, we may find
a subset $\omega^\sharp \subset \omega_0 \setminus \overline{\omega_1}$ such that
\bel{4.7}
S_\sharp \subset \overline{\omega^\sharp}\ \mbox{and}\ \psi_0(x') \leq \beta_0\ \mbox{for}\ x'\in\omega^\sharp.
\ee
Let us now pick $\widetilde{\omega}^\sharp \subset \omega^\sharp$ such that $S_\sharp \subset \overline{\tilde{\omega}^\sharp}$, and introduce a function $\chi\in \mathcal{C}^\infty(\R^{n-1},[0,1])$ satisfying
\bel{4.12}
\chi(x')=\left\{
\begin{array}{ll}
1 & \mbox{if}\ x'\in \omega_0 \setminus \omega^\sharp \\
0 & \mbox{if}\ x'\in\widetilde{\omega}^\sharp.
\end{array}
\right.
\ee
Thus, bearing in mind that $p=p_1-p_2$ vanishes in $\Omega_0$ and that $v$ is the solution to \eqref{3.3}, we easily find that the function
$w(x,t):=\chi(x') v(x,t)$ satisfies the IBVP
\bel{4.13}
\left\{
\begin{array}{rcll}
-i\partial_t w-\Delta w+ p_1 w & = & -\cro{\Delta,\chi}v,  & \mbox{in}\ Q_0:=\Omega_0 \times (-T,T)\\
w(0,\cdot) & = & 0, & \textrm{in}\ \Omega_0 \\
w & = & 0 & \textrm{on}\ \Sigma_0:= \p \Omega_0 \times (-T,T).
\end{array}
\right.
\ee
Moreover, we deduce from \eqref{4.3} that
\bel{4.14}
\norm{w}_{{\mathcal C}^{N}([-T,T],L^2(\Omega))} + \norm{w}_{{\mathcal C}^{N-1}([-T,T],H^2(\Omega))} \leq C \| u_0 \|_{H^{2(N+1)}(\Omega)},
\ee
where $C$ denotes a generic positive constant that is independent of $\gamma$.
From this, \eqref{b1} and \eqref{b3}, we get two positive constants $C=C(\omega,\omega_0,T,T_0,M,M')$ and $\delta_1$, the last one being independent of $T_0$, such that the estimate
$$
\| w_{\gamma,t} \|_{L^2(\Omega_0 \times (-1,1))}^2 + \| \nabla w_{\gamma,t}\|_{L^2(\Omega_0 \times (-1,1))}^2 \leq C e^{\delta_1\gamma},
$$
holds uniformly in $t \in (-T_0,T_0)$ and $\gamma \in (1,+\infty)$. 

We turn now to establishing that $w_{\gamma,t}$ is solution to a parabolic Cauchy problem in $\Omega_0$, we shall make precise below. To do that we derive from \eqref{b3} upon integrating by parts that
$$
h^{-1}\p_\tau w_{\gamma,t}(x,\tau)=-i\FF(\p_t w)(x,z)-ih^{-1}\int_\R F_\gamma(z-\eta)\theta'(\eta) w(x,h \eta)d\eta,\ z=t - i \tau.
$$
Next, as we have $\Delta w_{\gamma,t}(x,\tau)=\FF(\Delta w)(x,z)$ by direct calculation, we get upon applying the FBI transform $\FF$ to \eqref{4.13} and remembering \eqref{4.10}, that
\bel{4.20}
\left\{ \begin{array}{rcll}
\mathcal{L}_h w_{\gamma,t}(x,\tau) & = & A_{\gamma,t}(x,\tau)+B_{\gamma,t}(x,\tau), & (x , \tau) \in \Omega_0 \times (-1,1), \\
w_{\gamma,t}(x,\tau) & = & 0, & (x , \tau ) \in \pd \Omega_0 \times (-1,1),
\end{array} \right.
\ee
where
\bel{4.21}
A_{\gamma,t}(x,\tau):=-\int_\R F_\gamma(z-\eta)\theta(\eta)\cro{\Deltas,\chi}v(x,h\eta)d\eta = -\cro{\Deltas,\chi} v_{\gamma,t}(x,\tau),
\ee
and
\bel{4.22}
B_{\gamma,t}(x,\tau):=-ih^{-1} \int_\R F_{\gamma}(z-\eta)\theta'(\eta) w(x,\eta h)d\eta,\ z=t - i \tau.
\ee

The next step of the proof is to apply the parabolic Carleman estimate of Lemma \ref{L4.2} to the solution $w_{\gamma,t}$ of \eqref{4.20} in order to derive the coming result.
\begin{lemma}
\label{L4.4}
There exists $\varepsilon \in (0,1)$, $\delta_j>0$ for $j=2,3$, and $\gamma_0>0$, such that
any solution $w_{\gamma,t}$ to \eqref{4.20} satisfies the estimate
$$
\| w_{\gamma,t} \|_{L^2((\Omega_3 \setminus \Omega_2) \times (-\varepsilon,\varepsilon))}^2 + \| \nablas w_{\gamma,t} \|_{L^2((\Omega_3 \setminus \Omega_2) \times (-\varepsilon,\varepsilon))}^2 
\leq C \left( e^{-\delta_2 \gamma} + e^{\delta_3 \gamma} \| \p_\nu w_{\gamma,t} \|_{L^2(\Gamma_* \times (-1,1))}^2 \right),
$$
uniformly in $t \in (-T_0,T_0)$ and $\gamma \in [ \gamma_0,+\infty)$.
\end{lemma}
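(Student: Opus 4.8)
The plan is to apply the parabolic Carleman estimate \eqref{4.11} of Lemma \ref{L4.2} to the solution $w_{\gamma,t}$ of \eqref{4.20}, coupling the large Carleman parameter to the FBI parameter through a linear relation $\sigma=\kappa\gamma$ for some constant $\kappa>0$ fixed below (so that, for $\gamma$ large, the admissibility condition $\sigma\geq\sigma_0 \slash h$ of Lemma \ref{L4.2} holds). Once \eqref{4.11} is written for $w_{\gamma,t}$, three quantities must be controlled: the source term $\norm{e^{\sigma\alpha}\mathcal{L}_h w_{\gamma,t}}^2_{L^2(\Omega_0\times(-1,1))}=\norm{e^{\sigma\alpha}(A_{\gamma,t}+B_{\gamma,t})}^2_{L^2(\Omega_0\times(-1,1))}$, the boundary term $\sigma\norm{\varphi_0^{1 \slash 2}e^{\sigma\alpha}\p_\nu w_{\gamma,t}}^2_{L^2(\Gamma_*\times(-1,1))}$, and finally the extraction of the unweighted left-hand side of Lemma \ref{L4.4} out of the weighted left-hand side of \eqref{4.11}.

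For the source term I would treat $A_{\gamma,t}$ and $B_{\gamma,t}$ separately. The commutator term $A_{\gamma,t}=-\cro{\Deltas,\chi}v_{\gamma,t}$ of \eqref{4.21} is supported, in $x'$, in $\supp\nabla\chi\subset\omega^\sharp\setminus\widetilde{\omega}^\sharp$, where $\psi_0\leq\beta_0$ by \eqref{4.7}; recalling \eqref{4.9} and that $\ell(\tau)\leq 1$, the weight obeys $\alpha\leq-c_0$ there for some $c_0>0$, so $\norm{e^{\sigma\alpha}A_{\gamma,t}}^2\leq e^{-2\sigma c_0}\norm{A_{\gamma,t}}^2\leq Ce^{(\delta_1-2\kappa c_0)\gamma}$, using the bound $\norm{v_{\gamma,t}}^2+\norm{\nablas v_{\gamma,t}}^2\leq Ce^{\delta_1\gamma}$ obtained from \eqref{b1} and \eqref{4.3} exactly as the analogous bound for $w_{\gamma,t}$ stated before the lemma. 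For $B_{\gamma,t}$, defined in \eqref{4.22} through $\theta'$, the integration variable satisfies $2T_0\leq\abs{\eta}\leq 3T_0$, hence $\abs{\mathrm{Re}(z-\eta)}\geq T_0$ while $\abs{\mathrm{Im}(z-\eta)}=\abs{\tau}<1$; choosing $T_0$ large enough that the cone condition of \eqref{b1b} holds, the decay estimate \eqref{b1b} gives $\norm{e^{\sigma\alpha}B_{\gamma,t}}^2\leq\norm{B_{\gamma,t}}^2\leq Ce^{-2\delta'\gamma}$ with $\delta'$ proportional to $T_0^{1 \slash \rho}$. The boundary term is handled by a direct maximization in $\tau$ of $\sigma\varphi_0 e^{2\sigma\alpha}$ on $\Gamma_*\times(-1,1)$: since $\psi_0<\norm{\psi_0}_{L^\infty(\omega_0)}+b-a$ there, the numerator of $\alpha$ stays negative and bounded away from $0$, forcing $\sigma\varphi_0 e^{2\sigma\alpha}\leq C$, whence this term is at most $C\norm{\p_\nu w_{\gamma,t}}^2_{L^2(\Gamma_*\times(-1,1))}$.

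To recover the left-hand side of Lemma \ref{L4.4}, I would restrict \eqref{4.11} to the subregion $(\Omega_3\setminus\Omega_2)\times(-\varepsilon,\varepsilon)$. There $\psi_0\geq 2\beta_0$ by \eqref{4.6} and $\ell(\tau)\geq 1-\varepsilon^2$, so $\alpha\geq-c_2$ for some $c_2>0$, giving the lower bound $e^{\sigma\alpha}\geq e^{-\sigma c_2}$; dividing the left-hand side of \eqref{4.11} by the resulting powers of $\sigma$ thus costs a multiplicative factor $e^{2\sigma c_2}=e^{2\kappa c_2\gamma}$. Combining this with the source and boundary estimates above produces a bound of the form $C\para{e^{(2\kappa c_2+\delta_1-2\kappa c_0)\gamma}+e^{(2\kappa c_2-2\delta')\gamma}+e^{2\kappa c_2\gamma}\norm{\p_\nu w_{\gamma,t}}^2_{L^2(\Gamma_*\times(-1,1))}}$, and the announced estimate follows by setting $\delta_3:=2\kappa c_2$ and forcing the first two exponents to be negative.

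The crux, and the main obstacle, is precisely the ordering of the parameter choices that makes those two exponents negative. The decisive point is the comparison $c_0>c_2$: since $\alpha$ is the more negative the smaller $\psi_0$ is, and since $\psi_0\leq\beta_0$ on $\supp\nabla\chi$ whereas $\psi_0\geq 2\beta_0$ on $\Omega_3\setminus\Omega_2$, one does have $c_0>c_2$ provided $\varepsilon$ is taken small enough that the factor $(1-\varepsilon^2)^{-1}$ appearing in $c_2$ does not spoil the inequality. I would therefore fix first $\lambda\geq\lambda_0$, then $\varepsilon\in(0,1)$ small to secure $c_0>c_2$, then $\kappa$ large so that $2\kappa(c_0-c_2)>\delta_1$ (killing the first exponent, and using that $\delta_1$ is independent of $T_0$), and finally $T_0$ large so that $\delta'>\kappa c_2$ (killing the second, since $\delta'\sim T_0^{1 \slash \rho}$); the threshold $\gamma_0$ is then chosen to guarantee $\sigma=\kappa\gamma\geq\sigma_0 \slash h$ for all $\gamma\geq\gamma_0$. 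The care needed here is to check that none of these constants depends on $\gamma$ and that the successive choices introduce no hidden circular dependence.
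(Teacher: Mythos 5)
Your proof is correct and follows essentially the same route as the paper's: the same application of Lemma \ref{L4.2} to $w_{\gamma,t}$, the same separate treatment of $A_{\gamma,t}$ (supported where $\psi_0\leq\beta_0$, hence $\alpha\leq -\mu_1$) and of $B_{\gamma,t}$ (via \eqref{b1b} and the support of $\theta'$), the same choice of small $\varepsilon$ to secure the weight comparison $\mu_1>\mu_2$ on $\Omega_3\setminus\Omega_2$, and the same final balancing of exponents. The only cosmetic difference is that you couple the parameters through $\sigma=\kappa\gamma$ with a separately tuned constant $\kappa$, whereas the paper takes $\sigma=T_0\gamma$ and lets the largeness of $T_0$ alone (together with $1/\rho>1$, which your decoupled choice does not even need) make both bad exponents negative.
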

\begin{proof}
Fix $\gamma \in (1,+\infty)$ and $t \in (-T_0,T_0)$. In light of \eqref{4.20}, we apply the Carleman estimate of Lemma \ref{L4.2} to $w_{\gamma,t}$, and find for every $\s \in [\sigma_0 \slash h,+\infty)$ that
\bea
& & \s \| e^{\s \alpha} \nablas w_{\gamma,t} \|_{L^2(\Omega_0 \times (-1,1))}^2 + \s^3 \| e^{\s \alpha} w_{\gamma,t} \|_{L^2(\Omega_0 \times (-1,1))}^2 \nonumber \\
& \leq & 
C_0 \left( \| e^{\s \alpha}  A_{\gamma,t} \|_{L^2(\Omega_0 \times (-1,1))}^2 + \| e^{\s \alpha}  B_{\gamma,t} \|_{L^2(\Omega_0 \times (-1,1))}^2 +
\s \| \varphi_0^{1 \slash 2} e^{\s \alpha} \pd_\nu w_{\gamma,t} \|_{L^2(\Gamma_* \times (-1,1))}^2 \right). \label{4.27}
\eea
Further, we notice from \eqref{4.12} that $A_{\gamma,t}(\cdot,\tau)$ is supported in $\Omega_\sharp:= \omega_\sharp \times \R$ for every $\tau \in (-1,1)$, and from \eqref{4.9} and \eqref{4.7} that
$\alpha(x',\tau) \leq (-\mu_1)$ for all $(x',\tau) \in \omega^\sharp \times (-1,1)$, with
$\mu_1:=e^{\lambda(\norm{\psi_0}_\infty+b)}-e^{\lambda(\beta_0+a)}>0$. As a consequence we have
\bel{4.28}
\| e^{\s\alpha} A_{\gamma,t} \|_{L^2(\Omega_0 \times (-1,1))}^2  \leq e^{-2 \mu_1 \s} \|  A_{\gamma,t} \|_{L^2(\Omega_\sharp \times (-1,1))}^2 \leq e^{-2 \mu_1\s} \|  A_{\gamma,t} \|_{L^2(\Omega_0 \times (-1,1))}^2.
\ee
The next step is to chose $\varepsilon \in (0,1)$ so small that 
$\mu_2 ( e^{\lambda(\norm{\psi_0}_\infty+b)}-e^{\lambda(2\beta_0+a)} ) \slash \ell(\varepsilon) < \mu_1$.
Then, bearing in mind that $\ell(\tau) \geq \ell(\varepsilon) > 0$ for each $\tau \in (-\varepsilon, \varepsilon)$, we see from
\eqref{4.6} that
$\alpha(x',\tau) \geq (-\mu_2)$ for every $(x',\tau) \in (\omega_2 \setminus \omega_3) \times (-\varepsilon,\varepsilon)$.
This entails that
\beas
& & e^{-2 \mu_2 \s} \left( \| \nablas w_{\gamma,t} \|_{L^2(( \Omega_3 \setminus \Omega_2) \times (-\varepsilon,\varepsilon))}^2 
+ \| w_{\gamma,t} \|_{L^2( ( \Omega_3 \setminus \Omega_2) \times (-\varepsilon,\varepsilon) )}^2 \right) \nonumber \\
& \leq & 
\s \| e^{\s \alpha } \nablas w_{\gamma,t} \|_{L^2( \Omega_0 \times (-1,1))}^2
+ \s^3 \| e^{\s \alpha} w_{\gamma,t} \|_{L^2( ( \Omega_0 \times (-1,1) )}^2,\ \s \in [1,+\infty). \label{4.31}
\eeas
Setting $\mu:=\mu_1-\mu_2$, it follows from this and \eqref{4.27}-\eqref{4.28} that
\bea
& & \| \nablas w_{\gamma,t} \|_{L^2(( \Omega_3 \setminus \Omega_2) \times (-\varepsilon,\varepsilon))}^2 
+ \| w_{\gamma,t} \|_{L^2( ( \Omega_3 \setminus \Omega_2) \times (-\varepsilon,\varepsilon) )}^2 \nonumber \\
& \leq & C \left( 
e^{-2 \mu \s} \| A_{\gamma,t} \|_{L^2(\Omega_0 \times (-1,1))}^2 + 
e^{2\mu_2\s} \| B_{\gamma,t} \|_{L^2(\Omega_0 \times (-1,1))}^2 + 
\s e^{2\mu_2\s} \| e^{\s \alpha} \varphi_0^{1 \slash 2} \pd_\nu w_{\gamma,t} \|_{L^2(\Gamma_* \times (-1,1))}^2 \right), \label{4.32}
\eea
whenever $\sigma \in [\sigma_0 \slash h,+\infty)$. Here we assumed upon possibly substituting $\max(1, \sigma_0 \slash h)$ for $\sigma_0$, that $\sigma_0 \geq h$. 

In view of \eqref{b1} and \eqref{4.21}, the first term in the right hand side of \eqref{4.32} can be treated with the energy estimate 
$\| v \|_{\mathcal{C}^0([-T,T],H^1(\Omega))} \leq 2C \| u_0 \|_{H^{2(N+1)}(\Omega)}$, 
arising from \eqref{4.3}: We get a constant $\delta'>0$, independent of $T_0$ and $\gamma$, such that
\bel{4.24}
\| A_{\gamma,t} \|_{L^2(\Omega_0 \times (-1,1))}^2 \leq Ce^{\delta'\gamma},\ t \in (-T_0,T_0).
\ee
For the second term, we take into account the vanishing of $\theta'$ in the interval $(-2T_0,2T_0)$, imposed by \eqref{b2}, and deduce from \eqref{b1b} and \eqref{4.22} that
\bel{4.23}
\| B_{\gamma,t} \|_{L^2(\Omega_0 \times (-1,1))}^2 \leq
C e^{-\tilde{\delta} T_0^{1/\rho} \gamma},\ t \in (-T_0,T_0),
\ee
for some constant $\tilde{\delta}>0$ depending neither on $T_0$ nor on $\gamma$. Here we used the estimate $\| w \|_{{\mathcal C}^0([-T,T],L^2(\Omega))} \leq C \| u_0 \|_{H^{2(N+1)}(\Omega)}$ arising from \eqref{4.14}.

Last we notice from \eqref{iy2}--\eqref{4.9} that $\varphi_0^{1 \slash 2} e^{\s\alpha}$ is bounded on $S_* \times(-1,1)$, and
then deduce from \eqref{4.32}--\eqref{4.23} that 
\bea
& & \| \nablas w_{\gamma,t} \|_{L^2(( \Omega_3 \setminus \Omega_2) \times (-\varepsilon,\varepsilon))}^2 
+ \| w_{\gamma,t} \|_{L^2( ( \Omega_3 \setminus \Omega_2) \times (-\varepsilon,\varepsilon) )}^2 \nonumber \\
& \leq & C \left( e^{-2\mu \s + \delta' \gamma} + e^{2 \mu_2\s -\tilde{\delta} T_0^{1/\rho}\gamma} + \s e^{2\mu_2 \s} \| \pd_\nu w_{\gamma,t} \|_{L^2(\Gamma_* \times (-1,1))}^2 \right),\ \s \in [ \s_0 \slash h , +\infty).
\label{4.33}
\eea
Now, set $\gamma_0:=\max \left( 1, 3\s_0 \slash T \right)$ and for $\gamma \in [\gamma_0,+\infty)$, take $\s:=T_0 \gamma \geq \s_0 \slash h$ in \eqref{4.33}.
As the sum of the two first terms in the right hand side of \eqref{4.33} is majorized by
$e^{(-2\mu T_0+ \delta') \gamma}+e^{(2\mu_2
T_0-\tilde{\delta} T_0^{1/\rho})\gamma} \leq Ce^{-\delta_2\gamma}$ upon taking $T_0$ sufficiently large, since $1 \slash \rho >1$, we end up getting for all $t \in (-T_0,T_0)$ that
\beas
& & \| \nablas w_{\gamma,t} \|_{L^2(( \Omega_3 \setminus \Omega_2) \times (-\varepsilon,\varepsilon))}^2 
+ \| w_{\gamma,t} \|_{L^2( ( \Omega_3 \setminus \Omega_2) \times (-\varepsilon,\varepsilon) )}^2 \\
& \leq &
C \left( e^{-\delta_2\gamma} +e^{\delta_3 \gamma} \| \pd_\nu w_{\gamma,t} \|_{L^2(\Gamma_* \times (-1,1))}^2 \right),\
\gamma \in [\gamma_0,+\infty),
\eeas
which entails the desired result.
\end{proof}
\subsection{Completion of the proof}

Set $w_{\gamma}(x,t):=w_{\gamma,t}(x,0)$ and recall from \eqref{b3} that we have
\bel{b5}
w_{\gamma}(x,t)=(F_\gamma \ast (\theta \widetilde{w})(x,\cdot))(t),
\ee
for all $\gamma \in [\gamma_0,+\infty)$, $x \in \R^n$ and $t \in (-T_0,T_0)$,
where $F_\gamma$ is defined in \eqref{b0} and $\widetilde{w}(x,\eta):=w(x,h\eta)$.

Let us first deduce from Lemma \ref{L4.4} the following estimate on $w_{\gamma}$.

\begin{lemma}
\label{L4.5}
There exist
two positive constants $\delta_j$, $j=4,5$, 
such that the estimate
\beas
& & \| \nablas w_{\gamma} \|_{L^2(( \Omega_3 \setminus \Omega_2) \times (-T_0 \slash 2,T_0 \slash 2))}^2 
+ \| w_{\gamma} \|_{L^2( ( \Omega_3 \setminus \Omega_2) \times (-T_0 \slash 2,T_0 \slash 2) )}^2 \\
& \leq &
C \left( e^{-\delta_4 \gamma} + e^{\delta_5 \gamma} \| \pd_\nu \tilde{w} \|_{L^2(\Gamma_* \times (-3T_0,3 T_0))}^2 \right),
\eeas
holds for all $\gamma  \in [\gamma_0,+\infty)$ and for $T_0$ sufficiently large.
\end{lemma}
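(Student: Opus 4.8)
The plan is to transfer the fixed-$t$ estimate of Lemma \ref{L4.4}, which controls the FBI transform $w_{\gamma,t}(\cdot,\tau)$ over the slab $\tau\in(-\varepsilon,\varepsilon)$, into a bound for its restriction $w_\gamma(\cdot,t)=w_{\gamma,t}(\cdot,0)$ to the real axis $\tau=0$, now integrated over $t\in(-T_0/2,T_0/2)$. The key observation is that for fixed $x$ the map $z=t-i\tau\mapsto\FF w(x,z)$ is holomorphic in $\C$, since $F_\gamma$ is entire by \eqref{b0} and the integral in \eqref{b3} runs over the compact support of $\theta$. Consequently both $\abs{w_{\gamma,t}(x,\tau)}^2$ and $\abs{\nablas w_{\gamma,t}(x,\tau)}^2$ are subharmonic functions of $(t,\tau)$, differentiation in $x'$ commuting with holomorphy in $z$.

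First I would exploit subharmonicity through the mean value inequality. Fixing $t_0\in(-T_0/2,T_0/2)$ and $x$, the disk $D(t_0):=\set{(t,\tau);\ (t-t_0)^2+\tau^2<(\varepsilon/2)^2}$ is contained in $(-T_0,T_0)\times(-\varepsilon,\varepsilon)$ as soon as $T_0>\varepsilon$, so
$$ \abs{w_\gamma(x,t_0)}^2=\abs{w_{\gamma,t_0}(x,0)}^2\leq \frac{4}{\pi\varepsilon^2}\iint_{D(t_0)}\abs{w_{\gamma,t}(x,\tau)}^2\,dt\,d\tau, $$
and the same bound holds with $\nablas$ inserted. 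Integrating over $t_0\in(-T_0/2,T_0/2)$ and $x\in\Omega_3\setminus\Omega_2$ and applying Fubini — the inner $t_0$-integral over each $(t,\tau)$ being of length at most $\varepsilon$ — I would obtain
$$ \int_{-T_0/2}^{T_0/2}\norm{w_\gamma(\cdot,t)}_{L^2(\Omega_3\setminus\Omega_2)}^2\,dt\leq \frac{4}{\pi\varepsilon}\int_{-T_0}^{T_0}\norm{w_{\gamma,t}}_{L^2((\Omega_3\setminus\Omega_2)\times(-\varepsilon,\varepsilon))}^2\,dt, $$
together with its gradient analogue. Now Lemma \ref{L4.4} bounds the integrand on the right uniformly in $t$, so integrating its conclusion in $t\in(-T_0,T_0)$ yields, after the factor $2T_0$, a contribution $CT_0\,e^{-\delta_2\gamma}$ from the first term and $C\,e^{\delta_3\gamma}\int_{-T_0}^{T_0}\norm{\p_\nu w_{\gamma,t}}_{L^2(\Gamma_*\times(-1,1))}^2\,dt$ from the boundary term.

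It remains to estimate the boundary term by $\norm{\p_\nu\tilde w}_{L^2(\Gamma_*\times(-3T_0,3T_0))}^2$. Since $\p_\nu$ acts only on the spatial variables, it commutes with $\FF$, whence $\p_\nu w_{\gamma,t}(x,\tau)=\int_\R F_\gamma(t-i\tau-\eta)\theta(\eta)\p_\nu\tilde w(x,\eta)\,d\eta$ for $x\in S_*$. Because $\theta$ is supported in $(-3T_0,3T_0)$ by \eqref{b2} and $\abs{\mathrm{Im}(t-i\tau-\eta)}=\abs{\tau}\leq1$, the bound \eqref{b1} gives $\abs{F_\gamma(t-i\tau-\eta)}\leq C_1\gamma^\rho e^{C_2\gamma}$ on the relevant range; a Cauchy--Schwarz estimate in $\eta$ over $(-3T_0,3T_0)$ then produces
$$ \abs{\p_\nu w_{\gamma,t}(x,\tau)}^2\leq C T_0\,\gamma^{2\rho}e^{2C_2\gamma}\int_{-3T_0}^{3T_0}\abs{\p_\nu\tilde w(x,\eta)}^2\,d\eta, $$
and integrating over $(x,\tau,t)\in S_*\times(-1,1)\times(-T_0,T_0)$ bounds $\int_{-T_0}^{T_0}\norm{\p_\nu w_{\gamma,t}}_{L^2(\Gamma_*\times(-1,1))}^2\,dt$ by $C T_0^2\gamma^{2\rho}e^{2C_2\gamma}\norm{\p_\nu\tilde w}_{L^2(\Gamma_*\times(-3T_0,3T_0))}^2$. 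Collecting the two contributions and absorbing the fixed powers of $T_0$ and $\gamma$ into the exponentials by enlarging $\gamma_0$, I would reach the claimed inequality with any $\delta_4\in(0,\delta_2)$ and any $\delta_5>\delta_3+2C_2$.

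I expect the crux to be the subharmonicity step: recognizing that the FBI transform is holomorphic in $z=t-i\tau$ and that this lets one dominate the $\tau=0$ trace, integrated in real time over $(-T_0/2,T_0/2)$, by the slab integral over $\tau\in(-\varepsilon,\varepsilon)$ furnished by Lemma \ref{L4.4}. Once this link is made, the remaining steps are routine applications of Fubini and of the pointwise kernel bounds \eqref{b1}--\eqref{b2}.
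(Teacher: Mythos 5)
Your proposal is correct and follows essentially the same route as the paper: both exploit analyticity of the FBI transform in $z=t-i\tau$ to dominate the trace at $\tau=0$ by the slab integral over $\tau\in(-\varepsilon,\varepsilon)$ (the paper via the Cauchy formula averaged over radii, you via the equivalent subharmonic mean-value inequality), then integrate in $(x,t)$, apply Lemma \ref{L4.4}, and control the boundary term through the kernel bound \eqref{b1} on the support of $\theta$. Your explicit Cauchy--Schwarz treatment of $\p_\nu w_{\gamma,t}$ merely spells out what the paper compresses into ``bearing in mind \eqref{b2}-\eqref{b3}''.
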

\begin{proof}
Let $\varepsilon \in (0,1)$ be given by Lemma \ref{L4.4} and fix $\kappa \in [T_0 - \varepsilon , T_0 + \varepsilon ]$. 
We assume (without restricting the generality of the reasoning) that $\varepsilon <T_0 \slash 2$.
Since $w_{\gamma}(x,z) := w_{\gamma,\Pre{z}}(x,\Pim{z})$ is analytic in $z \in \{ \zeta \in \C,\ \Pre{\zeta} \in (-T_0,T_0),\ \Pim{\zeta} \in (-1,1) \}$ for every fixed $x \in \omega_2 \setminus \omega_3$, the Cauchy formula yields
$$
w_\gamma(x,\kappa) = \frac{1}{2i\pi}\int_{\vert
z-\kappa\vert=\varrho} \frac{w_\gamma(x,z)}{z-\kappa}dz
= \frac{1}{2\pi} \int_0^{2\pi} w_\gamma(x,\kappa+\varrho e^{i\phi}) d \phi,\ \varrho \in (0,\varepsilon).
$$
Therefore we have $\abs{w_\gamma(x,\kappa)}^2 \leq (2 \pi)^{-1} \int_0^{2\pi} \abs{w_\gamma(x,\kappa+\varrho e^{i\phi})}^2d \phi$, from the Cauchy-Schwarz inequality. Since the above estimate is valid uniformly in $\varrho \in (0,\varepsilon)$, we find that
\beas
\abs{w_\gamma(x,\kappa)}^2 & \leq & \frac{1}{2 \pi \varepsilon} \int_0^{\varepsilon }\int_0^{2\pi} \abs{w_\gamma(x,\kappa+\varrho
e^{i\phi})}^2d\phi d\varrho \\
& \leq & \frac{1}{2 \pi \varepsilon} \int_{\abs{\tau}\leq \varepsilon} \int_{\abs{t-\kappa}\leq \varepsilon } \abs{w_\gamma(x,t+i\tau)}^2 dt d\tau \\
& \leq & \frac{1}{2 \pi \varepsilon} \int_{-T_0}^{T_0} \| w_{\gamma,t}(x,\cdot) \|_{L^2(-\varepsilon, \varepsilon)}^2 dt,
\eeas
and hence
$$
\| w_\gamma \|_{L^2( (\Omega_3 \setminus \Omega_2) \times (-T_0 \slash 2 , T_0 \slash 2) )}^2
\leq
C \int_{-T_0}^{T_0} \| w_{\gamma,t} \|_{L^2(\left( \Omega_3 \setminus \Omega_2 \right) \times (-\varepsilon , \varepsilon) )}^2 dt,
$$
upon integrating with respect to $(x,\kappa)$ over $\left( \Omega_3 \setminus \Omega_2 \right) \times (-T_0 \slash 2,T_0 \slash 2)$.
Thus, bearing in mind \eqref{b2}-\eqref{b3} and applying Lemma \ref{L4.4}, we end up getting that
\bel{b6}
\| w_\gamma \|_{L^2( (\Omega_3 \setminus \Omega_2) \times (-T_0 \slash 2 , T_0 \slash 2) )}^2
\leq C \left( e^{-C_3\gamma}
+e^{C_4\gamma} \| \p_\nu \widetilde{w} \|_{L^2(\Gamma_* \times (-3T_0 , 3T_0))}^2 \right).
\ee
Finally, we notice upon arguing in the same way, that $\nablas w_\gamma$ may be substituted for $w_\gamma$ in the left hand side of \eqref{b6} so the desired result follows from this
and \eqref{b6}.
\end{proof}

We next establish the coming result with the help of Lemma \ref{L4.5}.

\begin{lemma}
\label{L4.6}
For any $T>0$ fixed, we may find $T_1 \in (0,T)$ such that we have
\bel{eqv}
 \| v \|_{L^2( ( \Omega_3 \setminus \Omega_2 ) \times (-T_1,T_1) )}^2 +  \| \nablas v \|_{L^2( ( \Omega_3 \setminus \Omega_2 ) \times (-T_1,T_1) )}^2
\leq C \left( \frac{1}{\gamma^{2\mu N}} + e^{\delta_5 \gamma} \| \p_\nu v \|_{L^2( \Gamma_* \times (-T,T) )}^2 \right),
\ee
for every $\gamma \in [\gamma_0,+\infty)$. Here $C>0$ depends only on $\omega$, $\omega_0$, and $T$, and the constant $\delta_5$ is the same as in Lemma \ref{L4.5}.
\end{lemma}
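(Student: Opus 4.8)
The plan is to recover $v$ (and $\nablas v$) from its FBI transform by letting $\gamma\to+\infty$. Recall from \eqref{b5} that $w_\gamma(x,t)=(F_\gamma\ast(\theta\widetilde w)(x,\cdot))(t)$, where $\widetilde w(x,\eta)=w(x,h\eta)$ and, by \eqref{b0}, $F_\gamma$ acts in the time variable as the Fourier multiplier $e^{-(\eta/\gamma^\rho)^{2m}}$; thus $w_\gamma$ is a Gaussian-type regularization of $\theta\widetilde w$ that converges to it as $\gamma\to+\infty$ at a polynomial rate governed by the time regularity of $w$. Two elementary geometric facts organize the reduction. First, on $\Omega_3\setminus\Omega_2=(\omega_2\setminus\omega_3)\times\R$ the cut-off $\chi$ equals $1$, since $\omega_2\setminus\omega_3\subset\omega_1\subset\omega_0\setminus\omega^\sharp$ by \eqref{2.1} and \eqref{4.12}; hence $w=v$ there, and likewise $\pd_\nu w=\pd_\nu v$ on $\Gamma_*$. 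Second, $\theta\equiv1$ on $(-T_0/2,T_0/2)$ by \eqref{b2}, so that $\theta\widetilde w=\widetilde w$ on that slab.

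The heart of the proof is a quantitative approximation estimate for $w_\gamma-\theta\widetilde w$. By Plancherel in the time variable and \eqref{b5}, one has, for each fixed $x$,
\[
\norm{(w_\gamma-\theta\widetilde w)(x,\cdot)}_{L^2(\R)}^2=\int_\R\para{1-e^{-(\eta/\gamma^\rho)^{2m}}}^2\abs{\widehat{\theta\widetilde w}(x,\eta)}^2\,d\eta .
\]
Using the elementary bound $0\leq 1-e^{-s}\leq s^{N/(2m)}$ for $s\geq0$, which is licit because $N/(2m)\leq1$ by \eqref{def-m}, the multiplier is dominated by $\gamma^{-\rho N}\abs{\eta}^N$, whence the right-hand side is at most $\gamma^{-2\rho N}\norm{\pd_t^N(\theta\widetilde w)(x,\cdot)}_{L^2}^2$. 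Integrating over $x\in\Omega_3\setminus\Omega_2$ and invoking the energy inequality \eqref{4.14}, which supplies $N$ time derivatives of $w$ in $L^2$, together with $\rho>\mu$ and $\gamma\geq1$ from \eqref{def-m}, yields
\[
\norm{w_\gamma-\theta\widetilde w}_{L^2((\Omega_3\setminus\Omega_2)\times(-T_0/2,T_0/2))}^2\leq C\gamma^{-2\rho N}\leq C\gamma^{-2\mu N}.
\]

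I would then reconstruct $v$ as follows. On the slab $(\Omega_3\setminus\Omega_2)\times(-T_0/2,T_0/2)$ the triangle inequality gives $\norm{\widetilde w}\leq\norm{w_\gamma}+\norm{w_\gamma-\theta\widetilde w}$; bounding the first summand by Lemma \ref{L4.5} and the second by the estimate just obtained, and absorbing $e^{-\delta_4\gamma}$ into $\gamma^{-2\mu N}$, I arrive for $\gamma\in[\gamma_0,+\infty)$ at
\[
\norm{\widetilde w}_{L^2((\Omega_3\setminus\Omega_2)\times(-T_0/2,T_0/2))}^2\leq C\para{\gamma^{-2\mu N}+e^{\delta_5\gamma}\norm{\pd_\nu\widetilde w}_{L^2(\Gamma_*\times(-3T_0,3T_0))}^2}.
\]
Since $\widetilde w(x,\cdot)=v(x,h\cdot)$ on $\Omega_3\setminus\Omega_2$ and $\pd_\nu\widetilde w(x,\cdot)=\pd_\nu v(x,h\cdot)$ on $\Gamma_*$, the change of variable $s=ht$, with $h=T/(3T_0)$, turns $(-T_0/2,T_0/2)$ into $(-T/6,T/6)$ and $(-3T_0,3T_0)$ into $(-T,T)$, up to the harmless Jacobian $h^{-1}$. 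Taking $T_1:=T/6$ then produces the claimed bound for $\norm{v}^2$.

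The gradient term is treated by the same scheme, applying the approximation estimate and Lemma \ref{L4.5} to $\nablas w$ in place of $w$. This is the step I expect to be the main obstacle: the energy estimate \eqref{4.14} provides $N$ time derivatives of $w$ in $L^2$ but only $N-1$ of $\nablas w$, so the naive rate for the gradient is merely $\gamma^{-2\mu(N-1)}$; recovering the stated exponent $\gamma^{-2\mu N}$ requires exploiting the slack built into \eqref{def-m} — the freedom to take $m$ (hence $\rho$) large and $\mu$ away from the endpoint — and is where the FBI parameters must be tuned with care. Everything else reduces to the bookkeeping of constants independent of $\gamma$.
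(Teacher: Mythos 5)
Your proposal reproduces the paper's own proof step for step: your Plancherel step is the paper's identity \eqref{4.41}, your multiplier bound $1-e^{-s}\le s^{N\slash(2m)}$ is the paper's bound $1-e^{-y^{2m}}\leq C y^{N}$ leading to \eqref{4.42}--\eqref{4.44}, and your triangle inequality with Lemma \ref{L4.5} followed by the rescaling $t\mapsto ht$, $h=T\slash (3T_0)$, with $T_1=T\slash 6$, is exactly \eqref{4.45} and the concluding lines. For the zero-order term your argument is complete and matches the paper.

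Concerning the point you flag as the main obstacle: it is real, and you should know that the paper does not resolve it either -- it merely asserts that the gradient bound is obtained ``in the same way as in the derivation of \eqref{4.44}'', which, as you observe, it is not, since \eqref{4.14} controls $N$ time derivatives of $w$ in $L^2$ but only $N-1$ time derivatives of $\nablas w$. One correction to your proposed repair, though: $\mu$ is \emph{not} at your disposal, since it is the parameter fixed at the start of the FBI construction and Lemma \ref{L2.1} must hold for every $\mu\in(0,1)$; only $m$ (hence $\rho$) may be tuned, and nothing prevents strengthening \eqref{def-m} to require $\rho\geq 2N\mu\slash(2N-1)$. With that choice, the interpolation inequality $\|\nablas f\|_{L^2(\Omega_0)}^2\leq \|f\|_{L^2(\Omega_0)}\|\Delta f\|_{L^2(\Omega_0)}$ (licit here since $\theta\widetilde{w}$ and $w_\gamma$ vanish on $\p\Omega_0$), applied frequency-wise to $\widehat{\theta\widetilde{w}}(\cdot,\zeta)$ and combined with the two bounds in \eqref{4.14}, shows that $\nablas(\theta\widetilde{w})$ has $N-\frac{1}{2}$ time derivatives in $L^2$, so the multiplier argument with exponent $N-\frac{1}{2}$ gives the rate $\gamma^{-2\rho(N-1\slash 2)}\leq\gamma^{-2\mu N}$. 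This rescues the gradient term precisely when $2N\mu\slash(2N-1)<1$, i.e. $\mu<1-1\slash(2N)$; for $\mu\in[1-1\slash(2N),1)$ neither this repair nor anything written in the paper produces the exponent $2\mu N$ for $\nablas v$ from the regularity available in \eqref{4.14}. In short, your write-up is faithful to the paper's proof and more honest than it: the obstacle you identified is a genuine gap in the published argument, not a defect introduced by your proposal.
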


\begin{proof}
Let $\widehat{u}(\cdot,\zeta)$, for $\zeta \in \R$, denote the partial Fourier transform computed at $\zeta$ of $t \mapsto u(\cdot,t)$. In light of \eqref{b5}, it holds true that
\bel{4.41}
\widehat{\theta \widetilde{w}}(\cdot,\zeta)-\widehat{w_\gamma}(\cdot,\zeta)
= (1-\widehat{F_\gamma})\widehat{\theta \widetilde{w}}(\cdot,\zeta),\ \zeta \in \R.
\ee
Therefore, taking into account that $\widehat{F_\gamma}(\zeta)=e^{-\para{\zeta \slash \gamma^\rho}^{2m}}$, using that
$1-e^{-y^{2m}} \leq C y^{N}$ for all $y \in [0,+\infty)$ (since $2m \geq N$ from \eqref{def-m}) and recalling that $\rho > \mu$ and $\gamma >1$, we derive from \eqref{4.41} that
\bel{4.42}
\| \widehat{\theta \widetilde{w}}(x,\cdot)-\widehat{w_\gamma}(x,\cdot) \|_{L^2(\R)} 
\leq \frac{1}{\gamma^{\mu N}} \|\zeta^{N} \widehat{\theta \widetilde{w}}(x,\cdot) \|_{L^2(\R)},\ x \in \Omega_3 \setminus \Omega_2.
\ee
Since the function $\theta$, defined in \eqref{b2}, is supported in $(-3 T_0, 3 T_0)$, it then follows from \eqref{4.42}
that
\bel{4.43}
\| \theta \widetilde{w}(x,\cdot) - w_\gamma(x,\cdot) \|_{L^2(\R)} \leq \frac{1}{\gamma^{\mu N}} \| \p_t^{N} (\theta \widetilde{w})(x,\cdot) \|_{L^2(\R)} 
\leq \frac{C}{\gamma^{\mu N}} \| \widetilde{w}(x,\cdot) \|_{H^{N}(-3T_0,3T_0)},\ x \in \Omega_3 \setminus \Omega_2,
\ee
for some constant $C>0$ depending neither on $x$ nor on $\gamma$.
Thus, bearing in mind that $\theta(t)=1$ for each $t \in [-T_0 \slash 2, T_0 \slash 2]$, we get upon squaring and integrating both sides of
\eqref{4.43} with respect to $x$ over $\Omega_3 \setminus \Omega_2$, that
\bel{4.44}
\| \widetilde{w} - w_\gamma \|_{L^2( (\Omega_3 \setminus \Omega_2 ) \times (-T_0/2 , T_0/2) )} \leq \frac{C}{\gamma^{\mu N}},\ \gamma \in (1,+\infty).
\ee
Here we used \eqref{4.14} to bound from above the $L^2( \Omega_3 \setminus \Omega_2, H^{N}(-3T_0,3T_0))$-norm of the function
$\widetilde{w}(x,t)=w(x, T \slash (3 T_0) t)$, uniformly in $\gamma \in (0,1)$.

Further, we proceed in the same way as in the derivation of \eqref{4.44} and obtain that
$$\| \nablas \widetilde{w} - \nablas w_\gamma \|_{L^2( (\Omega_3 \setminus \Omega_2 ) \times (-T_0/2 , T_0/2) )} \leq  \frac{C}{\gamma^{\mu N}}.$$
From this, \eqref{4.44} and Lemma \ref{L4.5}, it follows for all $\gamma \in [\gamma_0,+\infty)$ that
\beas
& & \| \widetilde{w} \|_{L^2( (\Omega_3 \setminus \Omega_2 ) \times (-T_0/2 , T_0/2) )}^2 + \| \nablas \widetilde{w} \|_{L^2( (\Omega_3 \setminus \Omega_2 ) \times (-T_0/2 , T_0/2) )}^2 \\
& \leq & \frac{C}{\gamma^{2\mu N}}+  \| w_\gamma \|_{L^2( (\Omega_3 \setminus \Omega_2 ) \times (-T_0/2 , T_0/2) )}^2 + \| \nablas w_\gamma \|_{L^2( (\Omega_3 \setminus \Omega_2 ) \times (-T_0/2 , T_0/2) )}^2 \\
& \leq & C \left( \frac{1}{\gamma^{2\mu N}}+  e^{\delta_5 \gamma} \| \p_\nu \widetilde{w} \|_{L^2(\Gamma_* \times (-3T_0 , 3T_0) )}^2 \right),
\eeas
provided $T_0$ is sufficiently large. 
As a consequence we have
\bea
& & \| w \|_{L^2( ( \Omega_2 \setminus \Omega_3 ) \times (-hT_0/2 , hT_0/2) )}^2 + \| \nablas w \|_{L^2( ( \Omega_2 \setminus \Omega_3 ) \times (-hT_0/2 , hT_0/2) )}^2  \nonumber \\
& \leq & C \left( \frac{1}{\gamma^{2\mu N}} + e^{\delta_5\gamma} \| \p_\nu w\|_{L^2(\Gamma_* \times (-3hT_0 , 3hT_0) )}^2 \right), \label{4.45}
\eea
since $\tilde{w}(\cdot,t)=w(\cdot,h t)$ for every $t \in \R$. Finally, bearing in mind that $h=T \slash (3T_0)$ and recalling from \eqref{4.12} that $w(x,t)=\chi(x') v(x,t)=v(x,t)$ for every $(x,t) \in (\Omega_3 \setminus \Omega_2) \times (-T,T)$, we end up getting from \eqref{4.45} that
$$
\| v \|_{L^2( ( \Omega_2 \setminus \Omega_3 ) \times (-T \slash 6 , T \slash 6) )}^2 + \| \nablas v \|_{L^2( ( \Omega_2 \setminus \Omega_3 ) \times (-T \slash 6, T \slash 6) )}^2 
\leq C \left( \frac{1}{\gamma^{2\mu N}}+e^{\delta_5\gamma} \| \p_\nu v\|_{L^2(\Gamma_* \times ( -T , T) )}^2 \right).
$$
This yields \eqref{eqv} with $T_1=T \slash 6$.
\end{proof}
Armed with Lemma \ref{L4.6}, we are now in position to complete the proof of Lemma \ref{L2.1}. This can be done upon applying \eqref{eqv} with
$\gamma = \abs{\log \norm{\p_\nu v}_*}  \slash \delta_5$, which is allowed when $\norm{\p_\nu v}_* \in (0, e^{-\delta_5 \gamma_0}]$ so that we have $\gamma \geq \gamma_0$. We find that
\bea
& & \| v \|_{L^2( ( \Omega_2 \setminus \Omega_3 ) \times (-T_1,T_1) )}^2 + \| \nablas v \|_{L^2( ( \Omega_2 \setminus \Omega_3 ) \times (-T_1, T_1) )}^2 \nonumber \\
& \leq & C \left( \abs{\log \norm{\p_\nu v}_*}^{-2\mu N} + \norm{\p_\nu v}_* \right) \leq C' \abs{\log \norm{\p_\nu v}_*}^{-2\mu N}, \label{4.50}
\eea
for $C'$ is a suitable positive constant. On the other hand, when $\norm{\p_\nu v}_* > e^{-\delta_5 \gamma_0}$, it is clear from the estimate $\| v \|_{L^2(-T,T;H^1(\Omega))} \leq 2 C \| u_0 \|_{H^{2(N+1)}(\Omega)}$, arising from \eqref{4.3}, that
$$
\| v \|_{L^2( ( \Omega_2 \setminus \Omega_3 ) \times (-T_1 , T_1) )}^2 + \| \nablas v \|_{L^2( ( \Omega_2 \setminus \Omega_3 ) \times (-T_1, T_1) )}^2 \leq C \norm{\p_\nu v}_*^{2 \mu N},
$$
so we get \eqref{2.11} directly from this and from \eqref{4.50} (upon enlarging $T$ into $6T$ so that we have $T_1=T \slash 6=T$).


\section{Appendix}
\setcounter{equation}{0} 
In this appendix we prove the parabolic Carleman estimate stated in Lemma \ref{L4.2}. Incidentally we make precise the dependence with respect to $\lambda$ and 
$h$, of the constant $C_0$ appearing in the right hand side of \eqref{4.11}.

We stick with the notations of Section 4 and start by gathering several useful straightforward properties of the weight functions $\varphi_0$ and $\alpha$, defined by \eqref{4.8}-\eqref{4.9}, in the coming lemma.
 
\begin{lemma}
\label{LA.1}
We may find three constants $\lambda_0 \geq 1$, $c > 0$ and $c' >0$, all of them depending only on $\omega_0$, such that for each $\lambda \geq \lambda_0$ and
all $(x',\tau) \in \omega_0 \times (-1,1)$, the following estimates hold simultaneously:
\bea
D_{x'}^2\alpha( \nablas\alpha , \nablas \alpha ) (x',\tau)& \geq & c \lambda^4 \varphi_0(x',\tau)^3, \label{A.1} \\
\abs{\Deltas \alpha(x',\tau)} & \leq & c'\lambda^2 \varphi_0(x',\tau)^2, \label{A.2}\\
\abs{\Deltas^2 \alpha(x',\tau)} & \leq & c'\lambda^4 \varphi_0(x',\tau)^3, \label{A.2b}\\
\abs{(\p_\tau \alpha) (\Deltas \alpha)(x',\tau)} & \leq & c' \lambda^2 \varphi_0(x',\tau)^3, \label{A.3}\\
\abs{\p_\tau^2 \alpha(x',\tau)} & \leq & c' \varphi_0(x',\tau)^3, \label{A.4}\\
\abs{\nablas(\p_\tau\alpha)(x',\tau) \cdot \nablas \alpha(x',\tau)} & \leq & c' \lambda^2 \varphi_0(x',\tau)^3, \label{A.5} \\
\Ds^2\alpha(\xi',\xi')(x',\tau) & \geq & -c \lambda \varphi_0 \abs{\xi'}^2,\ \xi' \in\R^{n-1}.\label{A.6}
\eea
\end{lemma}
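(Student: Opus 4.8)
The plan is to reduce all seven inequalities to elementary manipulations of the spatial factor $\phi:=e^{\lambda(\psi_0+a)}$, the constant $E:=e^{\lambda(\norm{\psi_0}_{L^\infty(\omega_0)}+b)}$, and the temporal factor $1/\ell$, writing $\varphi_0=\phi/\ell$ and $\alpha=(\phi-E)/\ell$. Three facts organize the computation. (i) Every spatial derivative annihilates the additive constant $E$: thus $\nablas\alpha=\lambda\varphi_0\nablas\psi_0$ and $\Ds^2\alpha=\lambda\varphi_0\,\Ds^2\psi_0+\lambda^2\varphi_0\,\nablas\psi_0\otimes\nablas\psi_0$, and more generally a spatial multi-derivative of order $k$ produces $\lambda^k\varphi_0$ times bounded combinations of derivatives of $\psi_0$, the top power of $\lambda$ carrying the highest power of $\nablas\psi_0$. (ii) Since $\psi_0\geq0$ on $\overline{\omega_0}$ by \eqref{iy1}-\eqref{iy2}, $a>0$, and $\ell\leq1$ on $(-1,1)$, one has $\varphi_0\geq\phi\geq e^{\lambda a}\geq1$ for $\lambda\geq\lambda_0\geq1$, so that lower powers of $\varphi_0$ are absorbed into higher ones and, together with $\lambda\geq1$, lower powers of $\lambda$ as well. (iii) The hypothesis $\abs{\nablas\psi_0}>0$ on the compact set $\overline{\omega_0}$ furnishes a uniform positive lower bound for $\abs{\nablas\psi_0}$.

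I would first settle the two Hessian estimates, which carry the essential content. For \eqref{A.1}, evaluating the explicit Hessian on $\nablas\alpha=\lambda\varphi_0\nablas\psi_0$ gives
$$\Ds^2\alpha(\nablas\alpha,\nablas\alpha)=\lambda^4\varphi_0^3\abs{\nablas\psi_0}^4+\lambda^3\varphi_0^3\,\Ds^2\psi_0(\nablas\psi_0,\nablas\psi_0);$$
by (iii) the first term is at least $c\lambda^4\varphi_0^3$, and it dominates the $\lambda^3$ remainder once $\lambda\geq\lambda_0$, which proves \eqref{A.1}. For \eqref{A.6}, testing the same Hessian against an arbitrary $\xi'\in\R^{n-1}$, the rank-one part $\lambda^2\varphi_0(\nablas\psi_0\cdot\xi')^2$ is nonnegative and may be dropped, while $\lambda\varphi_0\,\Ds^2\psi_0(\xi',\xi')\geq-c\lambda\varphi_0\abs{\xi'}^2$ follows from the boundedness of $\Ds^2\psi_0$.

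The remaining five bounds reduce to bookkeeping with (i)-(ii). Estimates \eqref{A.2}, \eqref{A.2b} and \eqref{A.5} involve at least one spatial derivative, hence do not see $E$: one counts a factor $\lambda$ per spatial derivative and a factor $1/\ell$ per $\tau$-derivative and absorbs the subleading contributions via $\varphi_0\geq1$ and $\lambda\geq1$ (for \eqref{A.2b} one uses in addition that the derivatives of $\psi_0$ entering $\Deltas^2\phi$ are bounded, so $\abs{\Deltas^2\phi}\leq C\lambda^4\phi$). The delicate step, and the main obstacle, is the pair of pure time-derivative estimates \eqref{A.3}-\eqref{A.4}, in which the constant $E$ survives yet the target constant $c'$ must remain independent of $\lambda$. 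Here $\p_\tau\alpha=(\phi-E)\,\p_\tau(1/\ell)$ and $\p_\tau^2\alpha=(\phi-E)\,\p_\tau^2(1/\ell)$ carry the factor $\phi-E$, which I must dominate by a power of $\phi$. This is precisely where the assumption $b<2a-\norm{\psi_0}_{L^\infty(\omega_0)}$ enters: it yields $E/\phi^2\leq e^{\lambda(\norm{\psi_0}_{L^\infty(\omega_0)}+b-2a)}\leq1$, hence $\abs{\phi-E}\leq\phi^2\leq\phi^3$ uniformly in $\lambda$. Combining this with $\p_\tau(1/\ell)=2\tau/\ell^2$, $\p_\tau^2(1/\ell)=2/\ell^2+8\tau^2/\ell^3$ and $\ell\leq1$ gives \eqref{A.3} and \eqref{A.4} with constants depending only on $\omega_0$. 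It then remains only to fix $\lambda_0\geq1$ large enough to guarantee the Hessian domination in \eqref{A.1}, which completes the argument.
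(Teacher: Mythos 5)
The paper offers no proof of Lemma \ref{LA.1}: it is introduced as a collection of ``useful straightforward properties'' of $\varphi_0$ and $\alpha$, and the appendix proceeds directly to the proof of Lemma \ref{L4.2}. Your computation is correct and supplies exactly the omitted verification, by what is evidently the intended route, namely direct differentiation of $\alpha=(\phi-E)/\ell$ in your notation $\phi=e^{\lambda(\psi_0+a)}$, $E=e^{\lambda(\norm{\psi_0}_{L^{\infty}(\omega_0)}+b)}$. In particular you isolate the only two non-mechanical points. First, spatial derivatives kill $E$, so $\nablas\alpha=\lambda\varphi_0\nablas\psi_0$ and $\Ds^2\alpha=\lambda\varphi_0\,\Ds^2\psi_0+\lambda^2\varphi_0\,\nablas\psi_0\otimes\nablas\psi_0$; the uniform positive lower bound on $\abs{\nablas\psi_0}$ over the compact set $\overline{\omega}_0$ coming from \eqref{iy1} then gives \eqref{A.1} once $\lambda_0$ is chosen so large that the $\lambda^4$ term absorbs the $\lambda^3$ remainder, and dropping the nonnegative rank-one part gives \eqref{A.6}. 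Second, $\p_\tau\alpha$ and $\p_\tau^2\alpha$ retain the factor $\phi-E$, and your use of $\psi_0\geq 0$ together with $b<2a-\norm{\psi_0}_{L^{\infty}(\omega_0)}$ to get $E\leq e^{2\lambda a}\leq\phi^2$, hence $\abs{\phi-E}\leq\phi^2$ uniformly in $\lambda$ (also using $\phi\geq 1$), is precisely what makes the $\lambda$-independent bounds \eqref{A.3}--\eqref{A.4} come out; this is the one place where the paper's otherwise unexplained upper restriction on $b$ enters.

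Two cosmetic remarks, both about the statement rather than about your argument. The bound \eqref{A.2b} presupposes that $\psi_0$ has bounded derivatives up to order four, whereas \eqref{iy1}--\eqref{iy2} only provide $\psi_0\in\mathcal{C}^2(\overline{\omega}_0)$; your assumption that the relevant derivatives of $\psi_0$ are bounded is therefore implicit in the paper as well. Also, \eqref{A.1} and \eqref{A.6} share the letter $c$, although \eqref{A.1} constrains $c$ from above (by roughly $\min_{\overline{\omega}_0}\abs{\nablas\psi_0}^4$) while \eqref{A.6} constrains it from below (by the size of the negative part of $\Ds^2\psi_0$); your proof, like any direct one, produces two a priori different constants, and since the appendix only uses the existence of some such constants, nothing downstream is affected.
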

In the sequel $C$ denotes a generic positive constant which depends only on $\omega_0$, whose value can change from line to line.

Put  $z:=e^{\s\alpha} w$ and notice for further reference from \eqref{4.8}-\eqref{4.9} that
\bel{A.9}
z(x,\pm 1)=0,\ x \in \Omega_0=\omega_0\times\R.
\ee 
Next, setting $f_\s :=e^{\s\alpha}(h^{-1}\p_\tau-\Delta)w$, we find through direct computation that
\bel{A.10}
L_1 z+L_2 z =g_\s := f_\s -\s (\Deltas \alpha) z,
\ee
where
\bea
L_1 z & := & h^{-1}\p_\tau z+2\s\nablas\alpha\cdot\nablas z, \label{A.11}\\
L_2 z & := & -\Delta z -\s \left( h^{-1}(\p_\tau \alpha)   + \s \abs{\nablas\alpha}^2 \right)z. \label{A.12}
\eea 
Due to \eqref{A.10}, we have the identity
\bel{A.14}
\sum_{j=1,2} \| L_j z \|_{L^2(\Omega_0 \times (-1,1))}^2 
+2 \Pre{\int_{-1}^1 \int_{\Omega_0} L_1z \overline{L_2z} dx d\tau}= \| g_\s \|_{L^2(\Omega_0 \times (-1,1))}^2,
\ee
so we are left with the task of estimating the $L^2(\Omega_0 \times (-1,1))$-scalar product of $L_1z$ and $L_2z$, appearing in the left hand side of \eqref{A.14}. In view of \eqref{A.11}-\eqref{A.12}, we have
\bel{A.15}
2 \int_{-1}^1\int_{\Omega_0} L_1 z \overline{L_2z} dx d\tau= \sum_{j=1,2,3} I_j,
\ee
where
\bea
I_1 & := & 2h^{-1}\int_{-1}^1 \int_{\Omega_0} \p_\tau z \para{-\Delta \overline{z} - \s \left(
h^{-1}(\p_\tau\alpha) + \s \abs{\nablas\alpha}^2 \right) \overline{z}} dx d\tau,\label{A.16}\\
I_2 & := & -4 \s \int_{-1}^1\int_{\Omega_0} \left( \nablas \alpha \cdot \nablas z \right) \Delta \overline{z} dx d\tau,\label{A.17}\\
I_3 & := & -4 \s^2 \int_{-1}^1 \int_{\Omega_0} \left( \nablas \alpha \cdot \nablas z \right) \para{h^{-1}(\p_\tau\alpha) + \s \abs{\nablas\alpha}^2} \overline{z} dx d\tau.\label{A.18}
\eea
Bearing in mind that $z_{|\p\Omega_0\times(-1,1)}=0$ from the very definition of $z$ (since the same is true for the function $w$, according to \eqref{4.13}), we integrate by parts with respect to $x$ in \eqref{A.16}and obtain that
$$
\Pre{I_1}=h^{-1}\int_{-1}^1 \int_{\Omega_0}\para{\p_\tau \abs{\nabla
z}^2-\s (h^{-1}\p_\tau \alpha + \s \abs{\nablas \alpha}^2)\p_\tau \abs{z}^2} dx d\tau.
$$
Recalling \eqref{A.9}, we next integrate by parts with respect to $\tau$ and find that
\bel{A.19}
\Pre{I_1}=\s h^{-1}\int_{-1}^1\int_{\Omega_0}\para{ h^{-1} (\p_\tau^2 \alpha) \abs{z}^2 + 2\s \nabla_{x'} (\p_\tau\alpha)
\cdot\nablas\alpha} |z|^2dx d\tau.
\ee
The second term, $I_2$, is handled in a similar way. Namely, we integrate by parts with respect to $x$ in the right hand side of \eqref{A.17}, use the identity $\nabla z = (\pd_\nu z) \nu$ on $\pd \Omega_0 \times (-1,1)$, and get
\beas
\Pre{I_2} & = & 4 \s\int_{-1}^1 \int_{\Omega_0} \Ds^2 \alpha(\nablas z,\nablas \overline{z}) dx d\tau + 2 \s \int_{-1}^1\int_{\Omega_0} \nabla \alpha \cdot \nabla | \nabla z |^2 dx d\tau \\
& &  - 4 \s \int_{-1}^1 \int_{\p\Omega_0} ( \p_\nu\alpha) \abs{\p_\nu
z}^2 dx d\tau.
\eeas
Therefore, taking into account that
$$ \int_{-1}^1\int_{\Omega_0} \nabla \alpha \cdot \nabla | \nabla z |^2  dx d\tau = -\int_{-1}^1\int_{\Omega_0} (\Delta \alpha) | \nabla z |^2 dx d\tau
+ \int_{-1}^1 \int_{\p\Omega_0} (\p_\nu \alpha) \abs{\p_\nu z}^2 dx d\tau,
$$
we see that
\bea
\Pre{I_2} & = & 4 \s\int_{-1}^1 \int_{\Omega_0} \Ds^2 \alpha(\nablas z,\nablas \overline{z})dx d\tau  - 2 \s \int_{-1}^1 \int_{\Omega_0} (\Delta \alpha) | \nabla z |^2  dx d\tau \nonumber \\
& & - 2 \s \int_{-1}^1 \int_{\p \Omega_0} (\p_\nu \alpha) \abs{\p_\nu z}^2 dxd\tau. \label{A.20}
\eea
Let us now compute the real part of $I_3$. To this end we notice upon integrating by parts with respect to $x$ that
\bea
& & 2 \Pre{\int_{-1}^1 \int_{\Omega_0} \abs{\nablas\alpha}^2 (\nablas\alpha \cdot\nablas
z) \overline{z} dxd\tau} \nonumber \\
& = & - \int_{-1}^1\int_{\Omega_0} \left( \abs{\nablas
\alpha}^2 \Deltas\alpha + 2 \Ds^2\alpha(\nablas\alpha,\nablas\alpha) \right) |z|^2 dx d\tau, \label{A.21}
\eea
and that
\beas
& & 2 \Pre{\int_{-1}^1 \int_{\Omega_0} (\p_\tau\alpha) (\nablas\alpha \cdot \nablas z) \overline{z} dx d\tau} \nonumber \\
& = & - \int_{-1}^1\int_{\Omega_0} \left( (\p_\tau\alpha) (\Deltas\alpha) + (\nablas(\p_\tau\alpha) \cdot\nablas\alpha) \right) \abs{z}^2 dx d\tau.
\eeas
It follows from this, \eqref{A.18} and \eqref{A.21} that
\bea
\Pre{I_3} & = & 2\s^3\int_{-1}^1\int_{\Omega_0} \left( \abs{\nablas
\alpha}^2 (\Deltas\alpha) +
2 \Ds^2\alpha(\nablas\alpha,\nablas\alpha) \right) \abs{z}^2 dxd\tau \nonumber \\
& & + 2 \s^2 h^{-1}\int_{-1}^1\int_{\Omega_0} \left( (\p_\tau\alpha)(\Deltas\alpha) + \nablas(\p_\tau\alpha) \cdot \nablas\alpha \right)
\abs{z}^2 dxd\tau. \label{A.23}
\eea
Finally, putting \eqref{A.15}, \eqref{A.19}, \eqref{A.20} and \eqref{A.23} together we end up getting that
\bea 
2 \Pre{\int_{-1}^1\int_{\Omega_0} L_1 z \overline{L_2 z} dx d\tau} & = & -2 \s \int_{-1}^1\int_{\Omega_0} (\Deltas\alpha) \para{ \abs{\nabla z}^2
 -\s^2 \abs{\nablas \alpha}^2 \abs{z}^2} dx d\tau \nonumber \\
& & + 4 \s \int_{-1}^1\int_{\Omega_0} \Ds^2 \alpha(\nablas z,\nablas \overline{z})  dx d\tau + \sum_{j=1,2,3} J_j, \label{A.24}
\eea
where have set
\bea
J_1 &:=& 4\s^3\int_{-1}^1 \int_{\Omega_0} \Ds^2\alpha(\nablas\alpha,\nablas\alpha) \abs{z}^2 dx d\tau, \label{A.25}\\
J_2 &:=& -2 \s \int_{-1}^1 \int_{\p\Omega_0} (\p_\nu \alpha) \abs{\p_\nu z}^2 dx d\tau, \label{A.26} \\
J_3 &:=& 2\s^2h^{-1}\int_{-1}^1\int_{\Omega_0} \para{2 \nablas (\p_\tau \alpha) \cdot \nablas \alpha + (\p_\tau\alpha) \Deltas\alpha}
 \abs{z}^2 dx d\tau \nonumber \\
& & +\s h^{-2}\int_{-1}^1\int_{\Omega_0} (\p_\tau^2 \alpha) \abs{z}^2 dx d\tau.\label{A.27}
\eea
The next step of the proof is to bound from below each of the three terms $J_j$, $j=1,2,3$, appearing in the right hand side of \eqref{A.24}.
In view of \eqref{A.1}, $J_1$ is easily treated by \eqref{A.25}, as we have
\bel{A.28}
J_1 \geq 4 c \s^3 \lambda^4 \| \varphi_0^{3\slash 2} z \|_{L^2(\Omega_0 \times (-1,1))}^2,\ \lambda \geq\lambda_0,
\ee
where $c$ is the constant defined in Lemma \ref{LA.1}.
Similarly, we deduce from \eqref{A.3}--\eqref{A.5} and \eqref{A.27} that
$$
\abs{J_3} \leq (6\s^{2}h^{-1} +\s
h^{-2}) C_2 \lambda^2 \| \varphi_0^{3\slash 2} z \|_{L^2(\Omega_0 \times (-1,1))}^2,\ \lambda \geq \lambda_0,
$$
where $c'$ is the same as in Lemma \ref{LA.1}.
Therefore, for all $\s\geq \s_0 \slash h$,it follows readily from this and \eqref{A.28} that
\bel{A.30}
J_1+J_3\geq C \s^3\lambda^4 \| \varphi_0^{3\slash 2} z \|_{L^2(\Omega_0 \times (-1,1))}^2,\ \lambda \geq \lambda_0,
\ee
for some constant $C>0$, depending only on $\omega_0$.
On the other hand, since
$\p_\nu\alpha=\lambda (\p_\nu \psi_0) \varphi_0$ on $\pd \Omega_0 \times (-1,1)$ , by \eqref{4.8}, the identity \eqref{A.26} yields
$$
J_2 \geq -2 \s\lambda  \| \varphi_0^{1 \slash 2} (\p_\nu \psi_0)^{1 \slash 2} \p_\nu z \|_{L^2(\Gamma_* \times (-1,1)}^2.
$$
Now, putting this together with \eqref{A.6}, \eqref{A.24} and \eqref{A.30}, we obtain for all $\lambda \geq \lambda_0$ and all $\sigma \geq \sigma_0 \slash h$ that
\bea
2 \Pre{\int_{-1}^1\int_{\Omega_0} L_1z \overline{L_2z} dx d\tau} 
& \geq & C \s^3 \lambda^4 \| \varphi_0^{3\slash 2} z \|_{L^2(\Omega_0 \times (-1,1))}^2 - 4 c \s \lambda \| \varphi_0^{1 \slash 2} \nablas
z \|_{L^2(\Omega_0 \times (-1,1))} \nonumber \\
& & -2 \s \lambda \| \varphi_0^{1 \slash 2} (\p_\nu \psi_0)^{1 \slash 2} \p_\nu z \|_{L^2(\Gamma_* \times (-1,1))}^2 -2S, \label{A.31}
\eea
where
\bel{A.32}
S:=\s \int_{-1}^1\int_{\Omega_0} (\Deltas\alpha) \para{\abs{\nabla z}^2
 -\s^2 \abs{\nablas \alpha}^2\abs{z}^2} dxd\tau.
\ee
The rest of the proof involves bounding $S$ from above. To this purpose we recall from \eqref{A.12} that
$$
\abs{\nabla z}^2 - \s^2\abs{\nablas \alpha}^2\abs{z}^2 = \Pre{(L_2 z) \overline{z}} + \frac{\Delta |z|^2}{2} + \s h^{-1} (\p_\tau \alpha) | z |^2,
$$ 
and then deduce from \eqref{A.32} that
\beas
S &=& \s\int_{-1}^1\int_{\Omega_0}(\Deltas\alpha) \left( \Delta | z |^2 \slash 2 + \Pre{(L_2 z) \overline{z}} + \s h^{-1} (\p_\tau\alpha) |z|^2 \right)  dx  d \tau \\
&=& \s \int_{-1}^1 \int_{\Omega_0} \left( (\Deltas^2\alpha \slash 2) \abs{z}^2 + (\Deltas\alpha) \para{\Pre{(L_2z) \overline{z}} + \s h^{-1}(\p_\tau\alpha) |z|^2} \right) dx d\tau.
\eeas
As a consequence we have
\beas
\abs{S} & \leq & \norm{L_2z}^2_{L^2(Q)} \slash 4 + \| |\Deltas^2 \alpha|^{1 \slash 2} z \|_{L^2(\Omega_0 \times (-1,1))}^2 \\
& & + \s^2 \left( \| (\Deltas \alpha) z \|_{L^2(\Omega_0 \times (-1,1))}^2 + h^{-1} \| \abs{\p_\tau\alpha} z \|_{L^2(\Omega_0 \times (-1,1))}^2 \right).
\eeas
This, together with \eqref{A.2}-\eqref{A.2b} and \eqref{A.5}, yields that
$$
\abs{S} \leq \norm{L_2z}^2_{L^2(Q)} \slash 4 +C \left( \s \lambda^4 + \s^2 \lambda^2 (\lambda^2+h^{-1}) \right)
\| \varphi_0^{3 \slash 2} z \|_{L^2(\Omega_0 \times (-1,1))}^2.
$$
It follows from this and \eqref{A.31} upon taking $\s \geq \s_0 \slash h$ that
\beas
2 \Pre{ \int_{-1}^1 \int_{\Omega_0} L_1z \overline{L_2z} dx d\tau}+ \norm{L_2z}^2_{L^2(Q)} \slash 2 & \geq & 
\s^3 \lambda^4 \| \varphi_0^{3 \slash 2} z \|_{L^2(\Omega_0 \times (-1,1))}^2 - 4 c \s \| \varphi_0^{1 \slash 2} \nablas z \|_{L^2(\Omega_0 \times (-1,1))}^2 \\
& & -2 \s \lambda \| \varphi_0^{1 \slash 2} (\p_\nu \psi_0)^{1 \slash 2} \p_\nu z \|_{L^2(\Gamma_* \times (-1,1))}^2.
\eeas
Having estimated all the contributions depending on $h$, we proceed as in \cite[Appendix]{[FZ]} and obtain the desired result.




\begin{thebibliography} {[10]}
\frenchspacing \baselineskip=12 pt plus 1pt minus 1pt
%
\bibitem{[Albano]}{P. Albano: }{\it Carleman estimates for the Euler-Bernoulli plate
operator}, Elec. J. of Differential Equations 53 (2000), 1--13.
%
\bibitem{[ALP]}{S. Avdonin, S. Lenhart and V. Protopopescu: }{\it Solving the dynamical inverse problem for the
Schr\"odinger equation by the boundary bontrol method}, Inverse
Problems 18 (2002), 41--57.
%
\bibitem{[BLR]}{C. Bardos, G. Lebeau and J. Rauch: }{\it Sharp sufficient conditions for the
observation, control and stabilization from the boundary}, SIAM J. Control Optim. 30 (1992), 1024--1165.


\bibitem{[BP]}{L. Baudouin and J.-P. Puel: }{\it Uniqueness and stability in an inverse problem for the
Schr\"odinger equation}, Inverse Problems 18 (2002), 1537--1554.


\bibitem{[Be1]}{M. Bellassoued: }
{\it Global logarithmic stability in inverse hyperbolic problem by
arbitrary boundary observation}, Inverse Problems 20 (2004),
1033--1052.

\bibitem{[Be2]}{M. Bellassoued: }{\it Uniqueness and stability in determining the speed of propagation of
second-order hyperbolic equation with variable coefficients}, Applicable Analysis 83 (2004), 983--1014.

\bibitem{BC} {M. Bellassoued and M. Choulli: }, {\it Logarithmic stability in the dynamical inverse problem for
the Schr\"odinger equation by arbitrary boundary observation}, J. Math. Pures Appl. 91, 3 (2009), 233--255.

\bibitem{[BeLR]}{M. Bellassoued and J. Le Rousseau: }{\it  Carleman estimates for elliptic operators with complex coefficients. Part I: Boundary value problems}, J. Math. Pures Appl. (2015).

\bibitem{[BeYa]}{M. Bellassoued and M. Yamamoto: }
{\it Logarithmic stability in determination of a coefficient in an
acoustic equation by arbitrary boundary observation}, J. Math. Pures
App. 85 (2006), 193--224.

\bibitem{[BeYa2]}{M. Bellassoued and M. Yamamoto: }{\it Lipschitz stability in determining density and two Lam\'e coefficients}, J. Math. Anal. Appl. 329 (2007), 1240--1259. 

\bibitem{[BeImYa]}{M. Bellassoued, O. Yu. Imanuvilov and M. Yamamoto: }{\it Inverse problem of determining the density and two Lam\'e coefficients by boundary data}, to appear in SIAM J. Math. Anal.

\bibitem{[B]}{A. L. Bukhgeim: }{\it Introduction to the theory of inverse problems}, Nauka, Norosibirsk (1988).

\bibitem{[BCIV]}{A. L. Bukhgeim, J. Cheng, V. Isakov and M. Yamamoto: }{\it Uniqueness in determining damping coefficients
in hyperbolic equations}, S. Saitoh et al. (eds), Analytic Extension Formulas and their Applications (2001), 27--46.

\bibitem{[BK]}{A. L. Bugheim and M. V.Klibanov: }{\it Global uniqueness of class of multidimensional inverse problems},
Soviet Math. Dokl. 24 (1981), 244--247.
\bibitem{[Carleman]}{T. Carleman: }
{\it Sur un probl\`eme d'unicit\'e pour les syst\`emes d'\'equations
aux d\'eriv\'ees partielles \`a deux variables ind\'ependentes}, Ark.
Mat. Astr. Fys. 2B (1939), 1--9.

\bibitem{[CY1]}{M. Choulli and M. Yamamoto: }{\it Generic well-posedness of a linear inverse
parabolic problem with respect to diffusion parameters}, J. Inv.
Ill-Posed Problems 7, 3 (1999), 241--254.

\bibitem{[CY2]}{M. Choulli and M. Yamamoto: }{\it Conditional stability in determining a
heat source}, J. Inv. Ill-Posed Problems 12, 3 (2004), 233--243.

\bibitem{[FZ]}{E. Fernandez-Cara and E. Zuazua: }{\it The cost of
approximate controllability for heat equations: the linear case},
Adv. Diff. Eq. 5 (2000), p. 465--514.

\bibitem{[FI]}{A. V. Fursikov, O. Yu. Imanuvilov: }
{\it Controllability of Evolution Equations}, Seoul National
University, Seoul (1996).

\bibitem{[H]}{L. H\"ormander: }{\it The analysis of linear partial
differential operators}, Springer Verlag, Volumes 1,2,3.
\bibitem{[Ima]}{O. Yu. Imanuvilov: }
{\it On Carleman estimates for hyperbolic equations}, Asymptotic
Analysis 32 (2002), 185--220.

\bibitem{[IY]}{O. Yu. Imanuvilov and M. Yamamoto: }{\it Global uniqueness and stability in determining coefficients of wave
equations}, Comm. Part. Diff. Equ. 26 (2001), 1409--1425.
\bibitem{[IY2]}{O. Yu. Imanuvilov and M.Yamamoto: }{\it Lipschitz stability in inverse parabolic problems by Carleman estimate},
 Inverse Problems 14 (1998), 1229--1249.
\bibitem{[IY3]}{O. Yu. Imanuvilov and M. Yamamoto: }{\it Determination of a coefficient in an acoustic equation with
single measurement}, Inverse Problems 19, (2003), 157--171.
\bibitem{[I1]}{V. Isakov: }{\it Inverse Problems for partial differential equations}, Springer-Verlag, Berlin, 1998.
\bibitem{[IsY]}{V. Isakov and M. Yamamoto: }{\it Carleman estimates with the Neumann boundary condition and its applications
to the observability inequality and inverse problems}, Comtemporary
Mathematics 268 (2000), 191--225.

\bibitem{[KK]}{M. A. Kazemi and M. V. Klibanov: }{\it Stability estimates for ill-posed Cauchy problems involving hyperbolic
equations and inequality}, Applicable Analysis 50 (1993), 93--102.

\bibitem{K} {Y. Kian: } {\it Stability of the determination of a coefficient for the wave equation in an infinite waveguide}, Inverse Probl. Imaging 8, 3 (2014), 713--732.

\bibitem{KPS1} {Y. Kian, Q. S. Phan and E. Soccorsi: } {\it Carleman estimate for infinite cylindrical quantum domains and application to inverse problems}, Inverse Problems 30, 5 (2014), .

\bibitem{KPS2} {Y. Kian, Q. S. Phan and E. Soccorsi: } {\it H\"older stable determination of a quantum scalar potential in unbounded cylindrical domains}, J. Math. Anal. Appl. 426, 1 (2015), 194--210.

\bibitem{[KH1]}{A. Khaidarov: }{\it On stability estimates in multidimentional inverse problems for differential
equation.},  Soviet Math. Dokl. 38  (1989), 614--617.

\bibitem{[KL]}{M. V. Klibanov: }{\it Inverse problems and Carleman estimates} Inverse Problems 8 (1992), 575--596.

\bibitem{[KM]}{M. V. Klibanov and J. Malinsky: }
{\it Newton-Kantorovich method for 3-dimensional potential inverse
scattering problem and stability of the hyperbolic Cauchy problem
with time dependent data}, Inverse Problems 7 (1991), 577--595.

\bibitem{[KT]} {M. V. Klibanov and A. Timonov: }
{\it Carleman Estimates for Coefficient Inverse Problems and
Numerical Applications}, VSP, Utrecht (2004).

\bibitem{[KY]}{M. V. Klibanov and M. Yamamoto: }
{\it Lipschitz stability of an inverse problem for an acoustic
equation}, Applicable Analysis 85 (2006), 515-538.

\bibitem{[La]}{M. M. Lavrent'ev: }{\it Some ill-posed problems of mathematics physics}, Springer-Verlag, 1967.
\bibitem{[L1]}{G. Lebeau: }{\it Contr\^ole de l'\'equation de Schr\"odinger}, J. Math. Pures Appl. 71, 3 (1992), 267--291.

\bibitem{[LR1]}{G. Lebeau and L. Robbiano: }{\it Stabilisation de
l'\'equation des ondes par le bord}, Duke Math. J. 86, 3 (1997),
465--491.



\bibitem{LiUl} {X. Li, G. Uhlmann: } {\it Inverse problems with partial data in a slab}, Inverse Prob. and Imaging 4, 3 (2010), 449--462.

\bibitem{MeOsRo} {A. Mercado, A. Osses and L. Rosier: }, {\it Inverse problems for the Schr\"odinger
equation via Carleman inequalities with degenerate weights}, Inverse Problems 24, 1 (2008), 015017.

\bibitem{[Nakamura]} {S-I. Nakamura: } {\it Uniqueness for an Inverse Problem for the
Wave Equation in the Half Space}, Tokyo J. of Math. 19, 1 (1996),
187--195.

\bibitem{[Ph]}{K.-D. Phung: }{\it Observability and control of Schr\"odinger
equations}, SIAM J. Control Optim 40, 1 (2001), 211--230.

\bibitem{[PY]}{J.-P. Puel and M. Yamamoto: }{\it On a global estimate in a linear inverse hyperbolic problem}, Inverse
Problems 12 (1996), 995--1002.

\bibitem{[Rakesh]}{Rakesh: } {\it An inverse problem for the wave equation in the
half plane}, Inverse Problems 9 (1993), 433--441.

\bibitem{[Ro]}{L. Robbiano: }{\it Fonction de co\^ut et contr\^ole des solutions des \'equations
hyperboliques}, Asymptotic Analysis 10 (1995), 95--115.
\bibitem{[Ro2]}{L. Robbiano: }{\it Th\'eor\`eme d'unicit\'e adapt\'e au
contr\^ole des solutions des probl\`emes hyperboliques}, Comm.
Partial Differential Equations 16 (1991), 789--800.
\bibitem{[RZ]}{L. Robbiano and C. Zuily: }{\it Uniqueness in the
Cauchy problem for operators with partially holomorphic
coefficients}, Invent. Math. 131 (1998), 493--539.
\bibitem{[Ta]}{D. Tataru: }{\it Carleman estimates and unique continuation for solutions to boundary value problems},
J. Math. Pures App. 75 (1996), 367--408.
\bibitem{[Ta2]}{D. Tataru: }{\it Carleman Estimates, Unique Continuation and
Controllability for anizotropic PDE's}, Contemporary Mathematics 209 (1997), 267--279.

\bibitem{[Y]}{M. Yamamoto: }{\it Uniqueness and stability in multidimensional hyperbolic inverse problems}, J. Math. Pures
App. 78 (1999), 65--98.












 


































\end{thebibliography}
\end{document}